\newtheorem{thm}{Theorem}[section]
\newtheorem{prop}[thm]{Proposition}
\newtheorem{lem}[thm]{Lemma}
\theoremstyle{definition}
\newtheorem{remark}[thm]{Remark}
\def\sep{{ \ \  }}
\def\seg{{\ \ \ \  \ \  \ \  }}
\def\N{{\mathbb N}}
\def\C{{\mathbb C}}
\def\seg{ \ \ \ \ \ \ }
\thanks{2010 {\it Mathematics Subject Classification.}
Primary 47H60;  Secondary 46G20, 46B28}
\title[Continuity of the composition operation] {On the continuity of the composition operation on spaces of holomorphic mappings}
\author[M.D. Acosta]{Mar{\'\i}a D. Acosta }
   \address{Departamento de An{\'a}lisis
Matem{\'a}tico\\ Universidad de Granada\\ 18071 Granada, Spain}
\email{dacosta@ugr.es}
\author[P. Galindo]{Pablo Galindo}
    \address{Departamento de An{\'a}lisis
Matem{\'a}tico\\ Universidad de Valencia\\ 46100 Burjasot,
Valencia, Spain} \email{galindo@uv.es}
   \author[L.A. Moraes]{Luiza A. Moraes}
\address{Instituto de Matem\'atica \\
 Universidade Federal do Rio de Janeiro \\ CP 68530 - CEP 21945-970 Rio de
Janeiro, RJ \\  Brazil}
\email{luiza@im.ufrj.br}
\thanks{The  first  author was  supported  by Junta de Andaluc\'{\i}a grant  FQM--185  and also by Spanish MINECO/FEDER grant  MTM2015-65020-P. The second and third authors   were partially supported  by MTM 2014-53241P-MINECO (Spain)}
\begin{document}

\keywords{Holomorphic mapping, bounded type, composition, compact-open topology}

 \maketitle

\today

    \begin{abstract}
We discuss the continuity of the \emph{composition} on several spaces of holomorphic mappings on open subsets of a complex Banach space.  On the Fr\'{e}chet space of the entire mappings  that are bounded on bounded sets  the  composition  turns to be even holomorphic. In such space we consider linear subspaces closed under left and right composition. We discuss  the relationship of such subspaces with  ideals of operators and give several examples of them.	
We also provide natural examples of spaces of holomorphic mappings where the composition is not continuous.
\end{abstract}

\section{Introduction and Background}
\label{intro}

 Throughout this paper $E$ is a complex Banach space and $U$ is an open subset of $E.$ We denote by $B_E$ the open unit ball of $E$
    and by $\mathcal{H}(U,E)$ the space of the holomorphic mappings  $f:U
\rightarrow E$, i. e., Fr\'{e}chet differentiable at every point in $U$. We denote by $\mathcal{H}(U,U)$ the set of the elements  $f \in
\mathcal{H}(U,E)$ such that $f(U) \subset U$.

 As usual, we denote by  $\tau_0$ the compact-open topology on $\mathcal{H}(U,E).$ The family  of seminorms $p_K$ given by
$$
p_K (f)= \max \{ \Vert f(x) \Vert : x \in K\} \seg (f \in \mathcal{H}(U,E)),
$$
where $K $ is any compact subset of $U$ generates  $\tau_0$.

Recall that $B \subset U $ is {\emph U-bounded} if $B$ is bounded and $d(B,\partial U)>0$. As usual,  $\mathcal{H}_b(U,E)$ denotes the space of holomorphic mappings from $U$ into $E$ which are bounded on the $U$-bounded subsets of $U$ endowed with the topology $\tau_b$ of uniform convergence on  $U$-bounded subsets of $U.$

This paper is concerned with the continuity of the \textsl{composition} operation on spaces
 of holomorphic mappings on open subsets of a complex Banach space. We mainly focus on  the Fr\'{e}chet space of entire mappings of bounded type where the composition operation turns to be holomorphic.
We  also prove that the composition operation is continuous  on $\big(\mathcal{H}(B_E,B_E),\tau_0\big).$
When working in  $\big(\mathcal{H}(B_E,B_E),\tau_0\big)$ one frequently
 faces with the failure of Montel theorem in the infinite-dimensional case.
 This motivated us to consider another natural  topology defined for spaces of holomorphic mappings acting on open subsets of a dual space  and related to Montel theorem.
It might have been useful  that $\mathcal{H}(B_{E^*},B_{E^*})$ endowed with such topology and the  composition operation would have been a topological semigroup. However we   prove that the composition is not continuous for such topology  (see  Theorem \ref{cont-tau-0-*}).

Borrowing the notion of ideal in the algebra $\mathcal{L}(E)$ of  bounded linear operators on $E$  we study its analogue in $\mathcal{H}_b(E,E)$ by considering linear subspaces closed under left and right composition, that we  call  ideals as well. Among them, $\mathcal{H}_k(E,E),$ the subspace of holomorphic mappings that map bounded sets into relatively compact ones  that in turn is, whenever $E$ has the approximation property, the smallest proper closed ideal  which contains some non-constant mapping. It is well known that the  subspace $\mathcal{H}_{wu}(E,E)\subset \mathcal{H}_b(E,E)$ of the holomorphic mappings that are weakly uniformly continuous on  bounded subsets of $E$  lies inside $\mathcal{H}_k(E,E)$ and we prove that both subspaces coincide if and only if $\mathcal{H}_{wu}(E,E)$ is an ideal in $\mathcal{H}_b(E,E)$. This fact is equivalent to the absence of copies of $\ell_1$ in $E.$

We provide techniques that allow to pass from ideals in $\mathcal{L}(E)$ to ideals in $\mathcal{H}_b(E,E).$
Some results in the holomorphic setting are quite different from the ones in the linear setting. First of all, $\{0\}$ is not an ideal in $\mathcal{H}_b(E,E)$ and the smallest ideal in this case is the ideal of the constant mappings. For  $E=c_0 \text{ or } \ell_p  \;\;(1 \leq p < \infty)$ it is well known that   $\mathcal{K}(E)$, the subspace  of the compact operators on $E$, and  $\{0\}$  are the only  closed proper ideals in  $\mathcal{L}(E).$ Remarkably, for these spaces,
the set of proper closed ideals  in $\mathcal{H}_b(E,E)$ different of the trivial ideal of
 constant mappings contains a smallest and a largest element which are different.


We refer to \cite{Mu} for the necessary background on infinite dimensional holomorphy.
\section{Positive results}
\begin{thm}
\label{cont}
 Let $E$ be a  Banach space. The mapping
$$
(f,g) \in (\mathcal{H}(B_E,B_E), \tau_0) \times (\mathcal{H}(B_E,B_E),\tau_0) \mapsto f \circ g \in
(\mathcal{H}(B_E,B_E), \tau_0)
$$
is continuous.
\end{thm}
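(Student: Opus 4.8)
The plan is to verify continuity at an arbitrary point $(f_0,g_0)$. Fix a compact set $K\subset B_E$ and $\varepsilon>0$; I must produce $\tau_0$-neighbourhoods of $f_0$ and $g_0$ on which $p_K(f\circ g-f_0\circ g_0)<\varepsilon$. The natural first move is the triangle inequality
$$
\|f(g(x))-f_0(g_0(x))\|\le \|f(g(x))-f_0(g(x))\|+\|f_0(g(x))-f_0(g_0(x))\|\seg(x\in K),
$$
so that it suffices to control the two summands uniformly on $K$. Writing $h=f-f_0$ and $C=g_0(K)$ (a compact subset of $B_E$), the first summand is $\|h(g(x))\|$ and the second measures the oscillation of $f_0$ between $g(x)$ and $g_0(x)$.

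The central device is a uniform Cauchy estimate that exploits the fact that every member of $\mathcal H(B_E,B_E)$ takes values in $B_E$ and is therefore bounded by $1$. Let $d=\dist(C,\partial B_E)>0$ (positive since $C$ is compact and contained in the open ball) and put $\rho=d/3$, $V=C+\rho\overline{B}_E$ and $W=C+2\rho\overline{B}_E$; then $\dist(W,\partial B_E)\ge\rho$, so for every $z\in W$ the ball $B(z,\rho)$ lies in $B_E$. The Cauchy inequality for the first derivative then yields $\|Df_0(z)\|\le 1/\rho$ and, since $\|h\|\le 2$ on $B_E$, $\|Dh(z)\|\le 2/\rho$ for all $z\in W$, with the latter bound independent of $f$. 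Integrating along segments (which stay in $W$ once their endpoints lie in $V$ and are at distance at most $\rho$) shows that $f_0$ and $h$ are Lipschitz on $V$ with constants $1/\rho$ and $2/\rho$ respectively.

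Now I choose the neighbourhoods. Set $\eta=\min\{\rho,\varepsilon\rho/6\}$ and take
$$
\mathcal U_g=\{g:\ p_K(g-g_0)<\eta\},\seg \mathcal U_f=\{f:\ p_{C}(f-f_0)<\varepsilon/2\},
$$
both $\tau_0$-neighbourhoods because $K$ and $C=g_0(K)$ are compact. For $g\in\mathcal U_g$ and $x\in K$ one has $\|g(x)-g_0(x)\|<\eta\le\rho$, hence $g(x)\in V$; the Lipschitz bounds give $\|h(g(x))\|\le\|h(g_0(x))\|+\tfrac{2}{\rho}\|g(x)-g_0(x)\|\le p_C(f-f_0)+\tfrac{2\eta}{\rho}$ and $\|f_0(g(x))-f_0(g_0(x))\|\le\tfrac{1}{\rho}\|g(x)-g_0(x)\|\le\tfrac{\eta}{\rho}$. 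Summing and using the choices of $\eta$ and $\mathcal U_f$ gives $p_K(f\circ g-f_0\circ g_0)\le\varepsilon/2+3\eta/\rho\le\varepsilon$, as required.

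The point I expect to be delicate is exactly the one the uniform estimate resolves: in infinite dimensions the sets $g(K)$, as $g$ ranges over a $\tau_0$-neighbourhood of $g_0$, fill out the non-compact tube $V$ rather than a single compact set, so one cannot simply ask $f$ to be close to $f_0$ on a fixed compact set containing all the $g(K)$. What saves the argument is that the common bound $\|f\|\le 1$ forces, via the Cauchy inequalities, a Lipschitz constant for $h=f-f_0$ that is uniform in $f$; this lets me transfer the control of $f-f_0$ from the genuinely compact set $g_0(K)$ to the nearby points $g(x)$ at the cost of the small, $f$-independent term $2\eta/\rho$. No appeal to Montel's theorem or to compactness of neighbourhoods is needed.
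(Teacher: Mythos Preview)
Your argument is correct. The Cauchy estimate is legitimate: for $z\in W$ the open ball $B(z,\rho)$ lies in $B_E$, and since every element of $\mathcal H(B_E,B_E)$ is bounded by $1$ on $B_E$, the first Taylor coefficient satisfies $\|Df_0(z)\|\le 1/\rho$ and $\|Dh(z)\|\le 2/\rho$, uniformly in $f$. The segment from $g_0(x)\in C$ to $g(x)$ stays in $C+\rho\overline{B}_E=V\subset W$, so integrating the derivative bound along it gives exactly the Lipschitz inequalities you use, and your arithmetic with $\eta=\min\{\rho,\varepsilon\rho/6\}$ closes the estimate. One cosmetic point: the phrase ``Lipschitz on $V$'' is slightly stronger than what you actually prove or need (you only use pairs at distance $<\rho$, one of them in $C$), but the application is fine.

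The route, however, differs from the paper's. The paper first invokes a general result (bounded subsets of $\mathcal H(B_E,E)$ are equicontinuous) to obtain equicontinuity of $\mathcal H(B_E,B_E)$, upgrades this to uniform equicontinuity on compact sets, deduces that for nets in this family pointwise convergence coincides with $\tau_0$-convergence, and then proves continuity of composition by a net argument that reduces to pointwise convergence. Your proof bypasses all of this with a direct $\varepsilon$--$\delta$ construction: the Cauchy inequality furnishes, in one stroke, an explicit Lipschitz constant on the tube $V$ that is uniform over all $f$, which is exactly a quantitative version of the equicontinuity the paper establishes abstractly. What you gain is a self-contained, elementary argument with no nets and no external citation; what the paper's approach buys is a cleaner separation of concerns and the side result (highlighted in its remark) that pointwise and compact-open convergence agree for nets in $\mathcal H(B_E,B_E)$, which is of independent interest.
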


\begin{proof}

First we notice that it is easy to check that $\mathcal{H}(B_E,B_E)$ is a bounded subset of  the space
$(\mathcal{H}(B_E,E),\tau_0)$ and consequently $\mathcal{F}
=\mathcal{H}(B_E,B_E)$ is equicontinuous by  \cite[Proposition 9.15]{Mu}. This means that for  each  $\varepsilon > 0$ and $a \in B_E$
there is $\delta_a > 0$ such that $B(a,\delta_a) \subset B_E$ and
\begin{equation}
\label{equicont--1}
\Vert h(u) - h(a) \Vert < \frac{\varepsilon}{3}\;\;\; \text{for all} \;\; \;u \in
B(a,\delta_a)\;\;\;\text{and for all}\;\;\;h \in \mathcal{F} = \mathcal{H}(B_E,B_E).
\end{equation}

Moreover, we show that $\mathcal{F} = \mathcal{H}(B_E,B_E)$ is uniformly equicontinuous on compact subsets
of $B_E.$ Indeed, let $K \subset B_E$ be a  (nonempty) compact set, take  $\varepsilon > 0$  and  assume that
for each $a \in K$, $\delta _a$  is a positive real number satisfying  \eqref{equicont--1}. Since $K$ is
compact, there is a  (nonempty) finite subset $F \subset K$ such that
\newline

\begin{equation}
\label{unionK}
 K \subset \displaystyle{ \bigcup_{a \in F} \Bigl(  a+\frac{\delta _a}{3} B_E} \Bigr).
 \end{equation}

  Take
$\delta = \frac{1}{3} \min \{ \delta _a : a \in F\} > 0$. Choose any elements $x,y \in K$ such that $\Vert x
- y \Vert < \delta $. By \eqref{unionK} there are elements $a, b \in F$ such that $\Vert x-a \Vert <
\frac{\delta _a}{3}$ and  $\Vert y-b \Vert < \frac{\delta _b}{3}$. Hence $\Vert a-b \Vert < \max\{
\delta_a,\delta _b\}$.  So, by using \eqref{equicont--1} we obtain  that for every $x,y \in K$ such that
$\Vert x - y \Vert < \delta $ we have
\begin{eqnarray*}
\Vert h(x)- h(y) \Vert &\le&  \Vert h(x)- h(a) \Vert +   \Vert h(a)- h(b) \Vert +  \Vert h(b)- h(y) \Vert\\
&\le& \frac{\varepsilon}{3} + \frac{\varepsilon}{3} + \frac{\varepsilon}{3}= \varepsilon,
\end{eqnarray*}
for all $h \in
\mathcal{F} .$
This completes the proof of the uniform equicontinuity of $\mathcal{F}$ on compact subsets of $B_E.$

The next step is to prove:

 (A) For every net $(f_\alpha)_{\alpha \in \Lambda}$ contained in  $\mathcal{F},$
pointwise convergence is equivalent to uniform convergence on compact sets of the domain $B_E.$

Assume  that $(f_\alpha)_{\alpha \in \Lambda}$ is a net in $\mathcal{F}$ that converges pointwise to an
element $f \in \mathcal{F}$.  Let $K \subset B_E$ be a compact set and take  $\varepsilon > 0$. In view of
the uniform equicontinuity of $\mathcal{F}$ on compact subsets of $B_E$ we know that there exists   $\delta  > 0$ such that
\begin{equation}
\label{equicont2}
\Vert h(x) - h(y) \Vert < \frac{\varepsilon}{3},\;\; \text{for all} \;\;
h \in \mathcal{F} \;\;\text{whenever}\;\; x,y \in K \;\; \text{satisfy}\;\; \Vert x-y \Vert < \delta .
\end{equation}
As $K$ is compact, we can assume that $  K \subset  F_1 + \delta B_E$, for some finite set $F_1 \subset K$.
Since $F_1$ is finite and the net $(f_\alpha)_{\alpha \in \Lambda}$ converges pointwise to $f$, there is
$\alpha_0 \in \Lambda$ satisfying
\begin{equation}
\label{cong-point-1}
\Vert f_\alpha (a) - f(a) \Vert < \frac{\varepsilon}{3}, \sep \text{for all} \;\; a \in F_1 \;\; \text{and
for all} \;\; \alpha \ge \alpha_0.
\end{equation}
Hence, for any $x \in K$ we can find  $a \in F_1$ such that $\Vert x-a \Vert < \delta $. In view of
\eqref{equicont2}  and \eqref{cong-point-1},  for every   $\alpha \ge \alpha_0$, we obtain
\begin{eqnarray*}
\Vert f_\alpha(x) - f(x) \Vert &\le&  \Vert f_\alpha(x) - f_\alpha(a) \Vert +  \Vert f_\alpha(a) - f(a) \Vert
+ \Vert f(a) - f(x)
\Vert \\
&\le&
\frac{\varepsilon}{3} +  \frac{\varepsilon}{3}  + \frac{\varepsilon}{3}  =\varepsilon.
\end{eqnarray*}

We remark that $\alpha_0$ does not depend on the choice of $x \in K$  and so we have just proved that
$\bigl(f_\alpha  \bigr) \stackrel{\tau_0}{\longrightarrow } f$.

Finally, we are  going to show that for any net $((f_\alpha, g_\alpha))_{\alpha \in \Lambda} \in \mathcal{F}
\times \mathcal{F}$ the facts that $\bigl( f_\alpha \bigr)  \stackrel{\tau_0}{\rightarrow} f \in \mathcal{F}$ and $\bigl( g_\alpha  \bigr)
\stackrel{\tau_0}{\rightarrow} g \in \mathcal{F}$ always imply $\bigl( f_\alpha \circ g_\alpha  \bigr)
\stackrel{\tau_0}{\rightarrow} f \circ g.$

By (A), it is enough to prove that $(g_\alpha \circ f_\alpha)_{\alpha \in \Lambda}$
converges pointwise to $g \circ f$ on $B_E$. Let us take  $x \in B_E$ and  $\varepsilon >0$. In view of
\eqref{equicont--1} there is $\delta   > 0$ such that  $B(f(x) ,\delta ) \subset B_E$ and
\begin{equation}
\label{equicont-fx}
\Vert h(u) - h(f(x)) \Vert < \frac{\varepsilon}{2} \sep \text{for all}\;\; u \in B(f(x),\delta ) \;\;
\text{and for all} \;\; h \in \mathcal{F}.
\end{equation}
Since  $ \bigl(f_\alpha  \bigr)  \stackrel{\tau_0}{\to} f$  we have that   $ \bigl( f_\alpha(x)  \bigr)  \to f(x)$  and so there exists
$\alpha_1 \in \Lambda $ such that for every $\alpha \ge \alpha_1$ it is satisfied that $\Vert f_\alpha(x) -
f(x) \Vert < \delta$. In view of \eqref{equicont-fx} we deduce that
\begin{equation}
\label{g-alfa-f-alfa-x}
\Vert g_\alpha(f_\alpha(x)) - g_\alpha (f(x)) \Vert < \frac{\varepsilon}{2} \;\; \text{for all} \;\; \alpha
\ge \alpha_1.
\end{equation}

As  $\bigl(g_\alpha \bigr)  \stackrel{\tau_0}{\to} g$,   we know that   $ \bigl(g_\alpha( f(x)) \bigr)  \to g(f(x))$  and so there exists
$\alpha_2 \ge \alpha _1$  such that for every $\alpha \ge \alpha_2$  we have  $\Vert g_\alpha( f(x)) -
g(f(x)) \Vert < \frac{\varepsilon}{2}$. By  using also  \eqref{g-alfa-f-alfa-x},  for any $ \alpha \ge \alpha
_2 $  we obtain that
$$
\Vert g_\alpha(f_\alpha(x)) -  g (f(x)) \Vert \le
 \Vert g_\alpha(f_\alpha(x)) -  g_\alpha  (f(x)) \Vert + \Vert g_\alpha(f (x)) -  g (f(x)) \Vert <
 \frac{\varepsilon}{2} +   \frac{\varepsilon}{2}  = \varepsilon.
$$
We just proved that $(g_\alpha \circ f_\alpha)_{\alpha \in \Lambda}$ converges pointwise to $g \circ f$. As a consequence,
$(g_\alpha \circ f_\alpha)_{\alpha \in \Lambda}$ converges  to $g \circ f$ in the compact-open topology.
This completes the proof of the theorem.
\end{proof}

\begin{remark}
The proof of Theorem \ref{cont} contains the proofs of the following two facts that are probably known: Every
equicontinuous family of mappings is equiuniformly continuous on compact sets and pointwise convergence is
equivalent to uniform convergence on compact subsets of the domain for every net contained in an
equicontinuous family of mappings. We could not find these results proved in the literature and so we decided
to include the proofs for the sake of completeness.
\end{remark}
\begin{remark}
As a consequence of Theorem \ref{cont} we have that $(\mathcal{H}(B_E,B_E), \tau_0)$ endowed with the
composition is a topological semigroup. Moreover, it is clear that Theorem \ref{cont} remains true if we take $(\mathcal{H}(U,U), \tau_0)$ (where $U$ is a bounded open subset  of $E$) instead of $(\mathcal{H}(B_E,B_E), \tau_0).$
\end{remark}


It is natural to ask if Theorem \ref{cont} holds for the whole space $E.$  The negative answer follows from the following result that J. Bonet kindly pointed  out to us and that we thankfully acknowledge.
\begin{prop}[Bonet]
 Let $E$ be  an infinite dimensional real or complex  Banach space. The mapping
$$
(T,S) \in (\mathcal{L}(E), \tau_0) \times \big(\mathcal{L}(E), \tau_0\big) \mapsto T \circ S \in
\big(\mathcal{L}(E), \tau_0\big)
$$
is not continuous.
\end{prop}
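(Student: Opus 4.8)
The plan is to prove discontinuity of the bilinear composition map at the origin $(0,0)$, which already suffices to refute continuity. A basic $\tau_0$-neighbourhood of $0$ in $\mathcal{L}(E)$ has the form $W_{K,\varepsilon}=\{R\in\mathcal{L}(E):\sup_{x\in K}\|Rx\|<\varepsilon\}$, with $K\subset E$ compact and $\varepsilon>0$. Fixing a nonzero $x_0\in E$ and $\varepsilon=1$, I would show that the single neighbourhood $W_{\{x_0\},1}$ of $0$ witnesses the failure: for every pair of basic neighbourhoods $U=W_{K_1,\delta_1}$ and $V=W_{K_2,\delta_2}$ of $0$ one can produce $T\in U$ and $S\in V$ with $\|T(S(x_0))\|\ge 1$, so that $T\circ S\notin W_{\{x_0\},1}$ although $(T,S)$ lies in the product neighbourhood $U\times V$. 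All the operators in the construction will be rank one.

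First fix, once and for all, a functional $\phi\in E^*$ with $\phi(x_0)=1$ (Hahn--Banach, since $x_0\neq0$). Given $K_1,K_2,\delta_1,\delta_2$, let $C$ denote the closed absolutely convex hull of $K_1$; by Mazur's theorem $C$ is compact, and hence so is $MC$ where $M:=2/\delta_1$. Here is the geometric heart of the argument: since $E$ is infinite dimensional, no ball is compact (Riesz), so the compact set $MC$ cannot contain the ball $\rho B_E$ for any $\rho>0$. Taking $\rho:=\delta_2/\big(1+\sup_{x\in K_2}|\phi(x)|\big)>0$, I can therefore choose a vector $v$ with $\|v\|<\rho$ and $v\notin MC$. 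The second ingredient is Hahn--Banach separation of the point $v$ from the closed absolutely convex compact set $MC$: it yields $\psi\in E^*$ with $|\psi(v)|>1$ and $\sup_{z\in MC}|\psi(z)|\le 1$, whence $\sup_{x\in K_1}|\psi(x)|=\sup_{z\in C}|\psi(z)|\le 1/M=\delta_1/2$.

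Now define $S:=\phi\otimes v$ and $T:=\psi\otimes u$, where $u$ is any unit vector, i.e. $Sx=\phi(x)\,v$ and $Tx=\psi(x)\,u$. Then $\sup_{x\in K_2}\|Sx\|=\|v\|\,\sup_{x\in K_2}|\phi(x)|<\delta_2$, so $S\in V$, while $\sup_{x\in K_1}\|Tx\|=\sup_{x\in K_1}|\psi(x)|\le\delta_1/2<\delta_1$, so $T\in U$. Finally $S(x_0)=\phi(x_0)\,v=v$, hence $\|T(S(x_0))\|=|\psi(v)|\,\|u\|=|\psi(v)|>1=\varepsilon$, as required. This exhibits the desired $T\in U$, $S\in V$ with $T\circ S\notin W_{\{x_0\},1}$ and completes the proof.

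I expect the only genuine obstacle to be the reconciliation of two competing demands on the intermediate vector $v=S(x_0)$: it must be small in norm so that $S$ stays small on $K_2$ (note that if $x_0\in K_2$ one cannot make $S(x_0)$ large, so the size must be recovered only at the level of $T$), yet it must be \emph{far} from $K_1$ in the dual sense so that some $\psi$ which is small on $K_1$ blows it up under $T$. These two requirements are simultaneously satisfiable precisely because a compact convex set in an infinite-dimensional space is too thin to absorb any ball; this is the one and only place where the infinite dimensionality of $E$ enters, and it is exactly the hypothesis of the proposition. The compactness of the absolutely convex hull (Mazur) and the separation theorem (Hahn--Banach) are the supporting routine facts, valid over $\R$ and $\C$ alike once $|\cdot|$ is read as modulus and $MC$ as a balanced convex set.
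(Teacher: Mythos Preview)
Your proof is correct. Both your argument and the paper's ultimately rest on the same geometric fact---that in an infinite-dimensional space no compact (absolutely convex) set can absorb a ball---and both work exclusively with rank-one operators. The execution, however, is different. The paper proceeds by an elegant reduction: it fixes $x_0,x_0^*$ with $x_0^*(x_0)=1$, observes that the maps $(x^*,x)\mapsto (x^*\otimes x_0,\,x_0^*\otimes x)$ and $T\mapsto x_0^*(T(x_0))$ are continuous, and computes that their composite with the composition map $C$ is precisely the duality pairing $(x^*,x)\mapsto x^*(x)$ on $(E^*,\tau_c)\times E$; continuity of $C$ would then force $r\overline{B}_E\subset\overline{\mathrm{aco}(K)}$ for some compact $K$, a contradiction. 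Your argument is instead a direct, hands-on construction: for every pair of basic neighbourhoods $U,V$ of $0$ you manufacture $T\in U$, $S\in V$ with $T\circ S$ outside a fixed neighbourhood, using Mazur plus Hahn--Banach separation explicitly. The paper's factorisation is slicker and makes transparent that the obstruction is exactly the discontinuity of the bilinear pairing; your approach is more elementary and self-contained, and makes the role of the intermediate vector $v=S(x_0)$ very concrete.
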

\begin{proof}
First fix $x_0 \in E$ and $x_0^*  \in E^\ast$ such that $x_0^*(x_0)=1.$ Denote by $\tau_c$ the topology on $E^*$ of the uniform convergence on compact subsets of $E.$ It is  immediate  that the mappings
$$
(x^* ,x) \in (E^\ast, \tau_c) \times E \mapsto x^*  \otimes x_0 \in \big(\mathcal{L}(E), \tau_0\big)
$$
and
$$
(x^* ,x) \in (E^\ast, \tau_c) \times E \mapsto x_0^* \otimes x \in \big(\mathcal{L}(E), \tau_0\big)
$$
are both continuous. Hence the mapping
$B:(E^\ast, \tau_c) \times E  \rightarrow \big(\mathcal{L}(E), \tau_0\big) \times \big(\mathcal{L}(E), \tau_0\big)$
defined by $B(x^*,x):=(x^*  \otimes x_0 , x_0^* \otimes x )$ is also continuous.
Moreover, the scalar-valued mapping
$\Phi:\big(\mathcal{L}(E), \tau_0\big) \rightarrow \mathbb{K}$ defined by $\Phi(T)= x_0^* \big(T(x_0)\big)$ is clearly continuous.

Assume that  the composition mapping
$$
(T,S) \in \big(\mathcal{L}(E), \tau_0\big) \times \big(\mathcal{L}(E), \tau_0\big) \stackrel{C}\mapsto T \circ S \in
\big(\mathcal{L}(E), \tau_0\big)
$$
is continuous.  Then the mapping $\Phi \circ C \circ B$  is  continuous as well, and for each  $x^* \in E^*$ and $x \in E$ one has that
$$
(\Phi \circ C \circ B) (x^* ,x) =
(\Phi \circ C) ( x^* \otimes x_0 ,  x_0 ^* \otimes x ) = \Phi \left(( x^*  \otimes x_0) \circ (x_0 ^* \otimes x) \right)
$$
$$
= x_0 ^*\Big( ( x^*  \otimes x_0) \circ (x_0^* \otimes x)(x_0)\Big) =
 x_0 ^* \Big(( x^*  \otimes x_0)(x)\Big)  =  x_0 ^*  \big( x^* (x) \cdot x_0\big)  = x^* (x  ).
 $$
 The continuity of $\Phi \circ C \circ B$ implies the existence of a compact $K \subset E$ and $r > 0$ such that
 $$\text{ if } x \in r \overline{B}_E \text{ and } x^* \in E^* \text{ with }  \sup \{ \vert x^* (y) \vert : y \in K \} \le 1,    \text{ then }   \sep  \vert x^* (x)  \vert \le 1.
 $$
 \noindent
 That is,  $\vert x^* (x)\vert \le  \sup \{ \vert x^* (y) \vert : y \in K \} $  for all $x \in  r\overline{B}_E $ and $x^* \in E^*.$  From where it follows by Hahn-Banach theorem that $r\overline{B}_E$ lies inside $\overline{\text{aco}(K)},$ the closure of the absolutely convex hull of $K.$ Since $\overline{\text{aco}(K)}$ is compact, we have that $ \overline {B}_E $ is compact. This contradicts the hypothesis that $\dim E = \infty.$
\end{proof}

Let $\mathcal{H}_b(U,U)$ denote the topological subspace of $\mathcal{H}_b(U,E)$ of  holomorphic mappings that map $U$ into $U.$
\begin{prop}\label{cont-ac}
Let $U\subset E$ be a balanced open set. The composition mapping
$$
(f,g) \in \mathcal{H}_b(U,U) \times \mathcal{H}_b(U,U) \mapsto f \circ g \in
\mathcal{H}_b(U,U)
$$
is continuous.
\end{prop}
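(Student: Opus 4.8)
The plan is to verify continuity at an arbitrary fixed pair $(f_0,g_0)\in\mathcal{H}_b(U,U)\times\mathcal{H}_b(U,U)$ by estimating the basic seminorms $p_B$. So I fix a $U$-bounded set $B$ and $\varepsilon>0$, and aim to produce $U$-bounded sets together with a $\delta>0$ such that controlling $p_B(g-g_0)$ and a suitable $\tau_b$-closeness of $f$ to $f_0$ forces $p_B(f\circ g-f_0\circ g_0)<\varepsilon$. The natural device is the splitting
$$ f\circ g-f_0\circ g_0=(f-f_0)\circ g+(f_0\circ g-f_0\circ g_0), $$
which gives $p_B(f\circ g-f_0\circ g_0)\le \sup_{x\in B}\|(f-f_0)(g(x))\|+\sup_{x\in B}\|f_0(g(x))-f_0(g_0(x))\|$. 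The first term is governed by the values of $f-f_0$ on the image $g(B)$, the second by the oscillation of $f_0$ between $g(x)$ and $g_0(x)$. Everything therefore reduces to trapping both images $g(B)$ and $g_0(B)$ inside a single fixed $U$-bounded set and to the (equi)continuity bookkeeping already implicit in the proof of Theorem \ref{cont}.

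The key step — and the step I expect to be the main obstacle — is the structural fact, which is exactly where the balancedness of $U$ enters: for $h\in\mathcal{H}_b(U,U)$ the image of a $U$-bounded set is again $U$-bounded. This is in any case forced on us, since otherwise $f\circ g$ need not even lie in $\mathcal{H}_b(U,U)$. I would first record the gauge reformulation: for a balanced open $U$ one has $U=\{x:\mu_U(x)<1\}$ for the Minkowski functional $\mu_U$, and a bounded set $V\subseteq R\,\overline{B}_E$ with $d(V,\partial U)\ge d$ satisfies $\sup_{v\in V}\mu_U(v)\le R/(R+d)<1$ (if $\mu_U(v)=m\in(0,1)$ then $v/m\in\partial U$, whence $d\le \|v-v/m\|=\|v\|(1-m)/m\le R(1-m)/m$). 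Thus ``$U$-bounded'' is equivalent to ``gauge bounded away from $1$''. Granting this, $B\subseteq\rho U$ for some $\rho<1$, and the restrictions of $h$ to complex lines $\lambda\mapsto h(\lambda\rho_1^{-1}x)$ map $\overline{\mathbb D}$ into $U$ precisely because $U$ is balanced; combining the Schwarz lemma for balanced domains with the fact that $h(0)$ is a fixed interior point and that $h$ is of bounded type should bound $\mu_U(h(x))$ uniformly in $x\in B$ away from $1$ (equivalently, $U$-bounded sets are bounded in the invariant distances of $U$ and holomorphic self-maps are nonexpansive for them). The degenerate case $U=E$ is trivial, as then $U$-bounded means merely bounded. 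The outcome is a fixed $U$-bounded $W$ with $g_0(B)\subseteq W$; enlarging $W$ by a fixed gauge-ball and shrinking $\delta$, the bound $p_B(g-g_0)<\delta$ also forces $g(B)\subseteq W$, since $g(x)\in g_0(x)+\delta\,\overline{B}_E$ for each $x\in B$.

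With such a $W$ in hand the two terms are routine. The first is at most $p_W(f-f_0)$, so it suffices to require $f$ to be $\tau_b$-close to $f_0$ on the $U$-bounded set $W$, say $p_W(f-f_0)<\varepsilon/2$. For the second I would use that $f_0$, being of bounded type, is Lipschitz on $U$-bounded sets: choosing $s>0$ so that $W+s\,\overline{B}_E$ is still $U$-bounded and setting $M:=p_{W+s\,\overline{B}_E}(f_0)<\infty$, the Cauchy inequalities give $\|f_0(u)-f_0(v)\|\le (M/s)\,\|u-v\|$ for $u,v\in W$; applied to $u=g(x)$ and $v=g_0(x)$, both in $W$, this bounds the second term by $(M/s)\,p_B(g-g_0)$. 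Hence taking $\delta$ small enough that $(M/s)\delta<\varepsilon/2$ yields $p_B(f\circ g-f_0\circ g_0)<\varepsilon$, which completes the argument. The only genuinely nontrivial ingredient is the image-boundedness claim of the second paragraph; the remaining estimates are the Lipschitz and triangle-inequality manipulations familiar from the equicontinuity arguments above.
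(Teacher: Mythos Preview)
Your approach coincides with the paper's: the same triangle-inequality splitting, the same trapping of $g(B)$ and $g_0(B)$ in a common $U$-bounded set (the paper takes $C=g(B)+\rho B_E$), and the same control of the oscillation term via continuity of $f_0$ on that set---you obtain it through a Cauchy-type Lipschitz bound, the paper through uniform continuity coming from polynomial approximation. The paper simply \emph{asserts} that $g(B)$ is $U$-bounded, so your second paragraph attempting to justify this via the Minkowski gauge and Schwarz/invariant-distance ideas is additional (admittedly sketchy) effort rather than a divergence; whatever gap remains there is the paper's gap as well.
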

\begin{proof}
We begin by recalling that holomorphic mappings of bounded type are uniformly con\-ti\-nu\-ous on $U$-bounded sets as they may be approximated there by polynomials.

Let $((f_\alpha,g_\alpha))_{\alpha \in \Lambda}  \subset  \mathcal{H}_b(U,U) \times \mathcal{H}_b(U,U)$ be a net converging to $(f,g)$  and $B$ a $U$-bounded set. Since $g(B)$ is $U$-bounded, there is $\rho>0$ such that $C:=g(B)+\rho B_E \subset U$ and $C$  is still $U$-bounded.

Fix $\varepsilon >0,$ that we may assume (and do) $\varepsilon <\rho.$ The uniform continuity of $f$ on $C$ leads us to $\delta>0,\;\delta<\rho,$ such that $\|f(u)-f(v)\|\le \varepsilon $ if $u,v\in C$ and $\|u-v\|<\delta.$

Next we find $\alpha_0$ such that $\|g_\alpha(x)-g(x)\|\le \delta$ for $\alpha\ge\alpha_0$ and $x\in B.$ Thus, $g_\alpha(x) \text{  and } g(x)\in C$  for $\alpha\ge\alpha_0$ and $x\in B.$ Now, we pick $\alpha_1\ge \alpha_0$ such that $\|f_\alpha(y)-f(y)\|\le \varepsilon $ for $\alpha\ge\alpha_1$ and $y\in C.$ Therefore,  if $x\in B$  and  $\alpha \ge \alpha_1,$ we have
$$
\|f_\alpha(g_\alpha(x))-f(g(x))\|\le \|f_\alpha(g_\alpha(x))-f(g_\alpha(x))\|+\|f(g_\alpha(x))-f(g(x))\|\le \varepsilon +\varepsilon.$$\end{proof}
It is well known that  on any infinite dimensional Banach space $E$ the topologies $\tau_b$ and   $\tau _0$ do not coincide on
$\mathcal{H}_b(B_E, B_E)$.
\begin{thm}
	\label{Hb comp}
 The composition mapping $$
(f,g) \in \mathcal{H}_b(E,E) \times \mathcal{H}_b(E,E) \mapsto f \circ g \in \mathcal{H}_b(E,E)$$
is holomorphic.\end{thm}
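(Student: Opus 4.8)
The plan is to invoke the standard characterization that a mapping between complex locally convex spaces is holomorphic exactly when it is \emph{continuous} and \emph{Gateaux holomorphic} (cf. \cite{Mu}), and then verify these two conditions for $\Phi(f,g)=f\circ g$. Continuity is already in hand: $E$ is a balanced open subset of itself, so Proposition \ref{cont-ac} applies verbatim with $U=E$; and $\Phi$ is well defined because for every bounded $B\subset E$ the set $g(B)$ is bounded and $f$ is bounded on $g(B)$, whence $f\circ g\in\mathcal{H}_b(E,E)$. It therefore remains only to establish Gateaux holomorphy, that is, that for every base point $(f_0,g_0)$ and every direction $(f_1,g_1)$ in $\mathcal{H}_b(E,E)\times\mathcal{H}_b(E,E)$ the one–variable map
$$
\psi(\lambda)=(f_0+\lambda f_1)\circ(g_0+\lambda g_1),\qquad \lambda\in\C ,
$$
is holomorphic with values in the Fr\'echet space $\mathcal{H}_b(E,E)$. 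Since $E$ is the whole space, $\psi$ is defined on all of $\C$, so no restriction on the domain of $\lambda$ intervenes (in contrast with the ball setting).

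Next I would write out the Taylor expansion of $\psi$ explicitly. Put $f_\lambda=f_0+\lambda f_1$ and $g_\lambda=g_0+\lambda g_1$; fixing $x\in E$ and expanding $f_0$ and $f_1$ in their Taylor series at the point $g_0(x)$ along the direction $g_1(x)$ turns both $f_0\big(g_0(x)+\lambda g_1(x)\big)$ and $f_1\big(g_0(x)+\lambda g_1(x)\big)$ into power series in $\lambda$. Collecting equal powers of $\lambda$ gives $\psi(\lambda)(x)=\sum_{k\ge 0}\lambda^k A_k(x)$, where each coefficient $A_k$ is a fixed combination of the differentials of $f_0$ and $f_1$ at $g_0(x)$ applied to powers of $g_1(x)$ (for instance $A_0=f_0\circ g_0$). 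The first routine point is that each $A_k$ lies in $\mathcal{H}_b(E,E)$: it is the composition of the bounded–type mapping $x\mapsto\big(g_0(x),g_1(x)\big)$ with the holomorphic map sending a base point and a vector to the corresponding Taylor coefficient of $f_0$ or $f_1$, hence $A_k$ is holomorphic and bounded on bounded sets.

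The technical heart is then to show that $\sum_k\lambda^k A_k$ converges to $\psi(\lambda)$ in $\mathcal{H}_b(E,E)$, locally uniformly in $\lambda$. Fix a bounded $B\subset E$; then $g_0(B)\subset R\,\overline{B}_E$ for some $R>0$ and $M:=\sup_{x\in B}\|g_1(x)\|<\infty$. As $f_0$ and $f_1$ are bounded on the ball of radius $R+1$, the Cauchy inequalities bound the $k$-th Taylor coefficient of $f_0$ (and of $f_1$) at any point of $g_0(B)$ applied to $g_1(x)$ by $C\,M^{k}$, uniformly over $x\in B$. This yields $\sup_{x\in B}\|A_k(x)\|\le C'\,r^{k}$ with $r$ depending only on $M$ and $R$, so that $\sum_k|\lambda|^k\sup_{x\in B}\|A_k(x)\|<\infty$ for $|\lambda|$ small. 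Since a power series with coefficients in a Fr\'echet space that converges locally uniformly with respect to every defining seminorm $p_B$ represents a holomorphic function, $\psi$ is holomorphic near $0$; re-centering at an arbitrary $\lambda_0$ (replacing $(f_0,g_0)$ by $(f_{\lambda_0},g_{\lambda_0})$) then shows that $\psi$ is entire.

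Combining the continuity from Proposition \ref{cont-ac} with the Gateaux holomorphy just obtained, the quoted characterization of holomorphy delivers that $\Phi$ is holomorphic. I expect the main obstacle to be precisely the uniform control, over an entire bounded set $B$, of the Taylor coefficients of $f_0$ and $f_1$ through the Cauchy estimates, together with checking that the coefficients $A_k$ genuinely belong to $\mathcal{H}_b(E,E)$; once these are secured, the remainder is bookkeeping and an appeal to the general locally convex holomorphy machinery.
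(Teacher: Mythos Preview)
Your overall strategy---continuity from Proposition \ref{cont-ac} plus Gateaux holomorphy, then invoke the standard ``continuous $+$ G-holomorphic $\Rightarrow$ holomorphic'' characterization---is sound and is essentially what the paper does too. But there is a genuine gap in your convergence argument, and the paper's route sidesteps it in two ways worth noting.

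\textbf{The gap.} With the Cauchy radius fixed at $\rho=1$ (``bounded on the ball of radius $R+1$''), your estimate gives $p_B(A_k)\le C\,M_B^{\,k}$ where $M_B=\sup_{x\in B}\|g_1(x)\|$. Hence the series $\sum_k\lambda^kA_k$ converges in the seminorm $p_B$ only for $|\lambda|<1/M_B$. Since $g_1$ is in general unbounded on $E$, the radii $1/M_B$ shrink to $0$ as $B$ ranges over all bounded sets, and there is \emph{no} common disk on which the series converges in $\mathcal{H}_b(E,E)$. So the sentence ``$\psi$ is holomorphic near $0$'' is not justified as written; re-centering at $\lambda_0$ does not help directly either, because the new $M_B$ is the same $\sup_B\|g_1\|$. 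The easy fix is to let $\rho$ depend on $B$ and on the compact set $K\subset\C$: for $|\lambda|\le L$ choose $\rho=2LM_B$, so that $p_B(A_k)\le C_{\rho}(M_B/\rho)^k=C_\rho(2L)^{-k}$ and the series converges in $p_B$ uniformly on $|\lambda|\le L$. Since $L$ and $B$ are arbitrary, the series converges in $\mathcal{H}_b(E,E)$ uniformly on compacta of $\C$, and $\psi$ is entire in one stroke---no re-centering needed.

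\textbf{Comparison with the paper.} The paper economizes in two places. First, it invokes the Fr\'echet-space Hartogs theorem (separate holomorphy implies joint holomorphy), so that for fixed $g$ the map $f\mapsto f\circ g$ is linear and continuous, hence trivially holomorphic, and only the variable $g\mapsto f\circ g$ requires a G-holomorphy argument; you instead handle the joint direction $(f_1,g_1)$ at once, which is more bookkeeping. Second, and more to the point of your gap, the paper expands $f$ in its \emph{global} Taylor series at the origin, $f=\sum_m P_m$, and writes $f\circ(g_1+\lambda g_2)=\sum_m P_m\circ(g_1+\lambda g_2)$, each term being a polynomial in $\lambda$ with values in $\mathcal{H}_b(E,E)$. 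The Cauchy inequality is then applied once on the balanced hull $L$ of $\{g_1(B)+\lambda g_2(B):\lambda\in K\}$ with the doubling $2L$, giving $\sup_L\|P_m\|\le 2^{-m}\sup_{2L}\|f\|$ uniformly in $\lambda\in K$. This is exactly the ``$\rho$ adapted to $B$ and $K$'' move done cleanly, and it avoids the variable basepoints $g_0(x)$ that your expansion introduces.
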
\begin{proof}
Since  $\mathcal{H}_b(E,E)$ is a Fr\'{e}chet space, it suffices to show that the composition mapping is separately holomorphic. For fixed $g \in \mathcal{H}_b(E,E),$ the mapping $f\mapsto f\circ g$ is clearly linear and, by Proposition \ref{cont-ac}, it is continuous, hence it is holomorphic.

Fix now $f \in \mathcal{H}_b(E,E).$ By Proposition \ref{cont-ac} the mapping  $g\mapsto f\circ g$ is continuous. So,  we have to show that it is a G-holomorphic mapping. Take any $g_1, g_2 \in  \mathcal{H}_b(E,E).$ We claim that the mapping
 \begin{equation} \label{sephol}\lambda \in \mathbb{C} \mapsto f \circ (g_1 + \lambda g_2)  \in \mathcal{H}_b(E,E)\end{equation} is analytic.    If the Taylor series of $f$ at $0$ is $f=\sum_m P_m ,$ we have $f \circ (g_1 + \lambda g_2)=\sum_m P_m\circ\big(g_1 + \lambda g_2\big) .$ For every $m\in\mathbb{N},$ the mapping $\lambda \in \mathbb{C} \mapsto P_m\circ\big(g_1 + \lambda g_2\big)$ is a polynomial with values in  $\mathcal{H}_b(E,E).$ So the claim will hold if the series $\sum P_m\circ\big(g_1 + \lambda g_2\big)$ converges in $\mathcal{H}_b(E,E)$ uniformly on compact sets in $\mathbb{C}.$ So, let $K\subset \mathbb{C}$ be a compact set and $B\subset E$ a bounded set. Since $g_1(B)$ and $g_2(B)$ are bounded sets in $E,$ also the balanced hull $L$ of the set $\{g_1(B)+\lambda g_2(B):\lambda\in K\}$ is a bounded set in $E.$ Hence by Cauchy inequalities, $\sup_{u\in L}\|P_m(u)\|\leq \frac{1}{2^m}\sup_{u\in 2L}\|f(u)\|.$ Thus $$\sup_{\lambda\in K}\sup_{u\in B}\|P_m\circ\big(g_1(u) + \lambda g_2(u)\big)\|\leq\frac{1}{2^m}\sup_{u\in 2L}\|f(u)\|,$$ that shows the uniform convergence of $\sum P_m\circ\big(g_1 + \lambda g_2\big)$ on $K$ in $\mathcal{H}_b(E,E).$

 Therefore, the mapping in (\ref{sephol}) is analytic and, so $g\mapsto f\circ g$ is $G$-holomorphic as wanted.
 \end{proof}

\section{Ideals in the space $\mathcal{H}_b(E,E)$}
We note that, endowed with the product defined by the composition $(f,g) \mapsto f \circ g,$ the set $\mathcal{H}_b(E,E)$ turns to be a semigroup. From now on $\mathcal{H}_b(E,E)$ will denote the algebraic system $(\mathcal{H}_b(E,E),+,\cdot,\circ)$ (where $+$ and $\cdot$ denote the usual addition and product by scalar on spaces of mappings) endowed with the topology $\tau_b.$ We remark that despite the product defined by the composition is continuous,  $\mathcal{H}_b(E,E)$ is not an algebra in the classical sense (cf. \cite[p. 227]{Ru}), since we may have $h \circ (f + g) \neq h \circ f + h \circ g$ and $\lambda(f \circ g) \neq f \circ \lambda g$ for some $f, g, h \in  \mathcal{H}_b(E,E)$ and $\lambda \in \mathbb{C}.$ It is worth to remark that    the algebra  $\mathcal{L}(E)$ of  bounded  linear operators  on $E$ is embedded in $\mathcal{H}_b(E,E)$.   Borrowing the notion of a (two sided) ideal in an algebra,
 a linear subspace $\mathcal{I}$ of $\mathcal{H}_b(E,E)$ such that
 $f\circ g\circ h \in \mathcal{I}$ whenever $g \in \mathcal{I}$ and $f,h \in \mathcal{H}_b(E,E)$  will be called
 an  \emph{ideal}.
Analogously, an ideal $\mathcal{I}$ in $\mathcal{H}_b(E,E)$ will be called { \it proper} if  $\mathcal{I} \varsubsetneqq \mathcal{H}_b(E,E)$. A proper ideal $\mathcal{I}$ in $\mathcal{H}_b(E,E)$ will be called  maximal if it  is not strictly contained in any other proper ideal.  Of course, given any ideal $\mathcal{I}$ in $\mathcal{H}_b(E,E)$ we have that all constant mappings belong to $\mathcal{I}$ since all constant mappings belong to $\mathcal{H}_b(E,E)$ and $\mathcal{I} \neq \emptyset.$ We recall that an ideal $\mathcal{I}$ in an algebra $\mathcal{A}$ is called non-trivial if $\{0\} \subsetneqq \mathcal{I} \subsetneqq \mathcal{A}.$

 Next we are going to study the structure of the proper closed ideals in $\mathcal{H}_b(E,E).$
  First of all we note that for every complex Banach space $E$,  the space of  constant mappings in $E$  and the subspace $\mathcal{H}_k(E,E)$ of  holomorphic mappings  that map bounded sets into relatively compact sets are proper closed ideals in $\mathcal{H}_{b}(E,E)$ if $E$ is infinite dimensional.

Other simple examples of proper closed ideals in $\mathcal{H}_{b}(E,E)$ follow.
Put
$$
\mathcal{LT}(E):=\{f\in  \mathcal{H}_b(E,E) :\;\text{ for each bounded subset } A\subset E, f(A) \text{ is a limited set in } E\}.
$$
To realize this statement it suffices to recall that any mapping in  $\mathcal{H}_b(E,E)$ maps limited sets into limited sets (\cite[Proposition 15]{Ga}). It is a closed ideal because limited sets have the Grothendieck encapsulating property, i.  e., a set $L$ such that for every $\varepsilon>0$ there is a limited set
$K_\varepsilon\subset E$ such that $L\subset B(0,\varepsilon)+K_\varepsilon,$  is necessarily   limited (see for instance \cite[Lemma 2.4]{GLM}).

The ideal $\mathcal{LT}(E)$ contains $\mathcal{H}_k(E,E).$ However, they may be different: Think of $E=\ell_\infty$ and recall that there is a polynomial $P:\ell_\infty \to c_0\subset \ell_\infty$ that is not weakly compact \cite[Proposition 2]{GG}. On the other hand, $P\in \mathcal{LT}(\ell_\infty)$ since the unit ball of $c_0$ is limited in $\ell_\infty$ (cf. \cite[Theorem 4.28)]{Di}.

Another example of a closed proper ideal in $\mathcal{H}_b(E,E)$: It is known that the Aron-Berner extension
$$
f\in \mathcal{H}_b(E,E)\mapsto \tilde{f}\in \mathcal{H}_b(E^{**},E^{**})
$$
is a well-defined continuous linear mapping. Assume that $E$ is symmetrically regular. Then  $\tilde{f}\circ\tilde{g}=\widetilde{f\circ g}$ for all $f,g\in \mathcal{H}_b(E,E)$ \cite[Corollary 2.2]{ChGarKimMa}. As a consequence, if  $E$ is a non-reflexive symmetrically regular Banach space, then the set
$$
\Xi(E):=\{f\in  \mathcal{H}_b(E,E):\; \tilde{f}(E^{**})\subset E \}
$$
is a closed proper ideal  with $\Xi(E)\cap \mathcal{L}(E)=\mathcal{WK}(E),$ where $\mathcal{WK}(E)$ is the closed ideal of weakly compact operators. Moreover,  $\Xi(E)$ contains the ideal $\mathcal{H}_k(E,E)$. Indeed, according to Davie and Gamelin \cite[Theorem 1]{DG}, for every $z\in E^{**}$ there is a bounded net $(x_i) \subset E$ such that $\tilde{P}(z)=\lim  \bigl( P(x_i) \bigr)  $ for every polynomial $P\in\mathcal{P}(E).$ In addition, for the Aron-Berner extension $\tilde{Q}$ of any $Q\in \mathcal{P}(^nE,E),$ one has $\tilde{Q}(z)=\text{w*-}\lim  \bigl( Q(x_i) \bigr) $. So if $Q$ is  (weakly) compact, we must have $\tilde{Q}(z)\in E.$ To conclude recall that the Taylor series of any $f\in  \mathcal{H}_k(E,E)$ is made up with compact polynomials \cite[Proposition 3.4]{AS}.  In particular the operator $df(x)$ is  compact  for every  $f\in  \mathcal{H}_k(E,E)$ and $ x\in E$, a fact that will be used later in  Proposition \ref{[J]}.

\begin{thm}
	\label{noideal} The following  statements are equivalent:

 1) $E$ does not contain a copy of $\ell_1.$

2) $\mathcal{H}_{wu}(E,E)=\mathcal{H}_k(E,E)$.

3) The  linear space $\mathcal{H}_{wu}(E,E)$ is  an ideal in $\mathcal{H}_{b}(E,E)$.

  \end{thm}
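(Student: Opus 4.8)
The plan is to establish the cycle $(2)\Rightarrow(3)\Rightarrow(1)\Rightarrow(2)$, taking advantage of the two facts recorded just before the statement: that $\mathcal{H}_k(E,E)$ is a closed ideal of $\mathcal{H}_b(E,E)$, and that $\mathcal{H}_{wu}(E,E)\subseteq\mathcal{H}_k(E,E)$ holds unconditionally. Only the implication $(1)\Rightarrow(2)$ carries real content; the other two are comparatively soft.

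The implication $(2)\Rightarrow(3)$ is immediate: if the two subspaces coincide, then $\mathcal{H}_{wu}(E,E)$ is an ideal simply because $\mathcal{H}_k(E,E)$ is one. For $(3)\Rightarrow(1)$ I would argue by contraposition, and the first move is to pin down \emph{why} being an ideal is restrictive here. A map $g\in\mathcal{H}_{wu}(E,E)$ post-composed with any $f\in\mathcal{H}_b(E,E)$ again lies in $\mathcal{H}_{wu}(E,E)$, because every element of $\mathcal{H}_b(E,E)$ is norm-uniformly continuous on $U$-bounded sets (the fact recalled at the start of the proof of Proposition \ref{cont-ac}): weakly close points are sent by $g$ to norm-close points, and then $f$ keeps them norm-close. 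Consequently, taking $f=\mathrm{id}$, the subspace $\mathcal{H}_{wu}(E,E)$ is an ideal if and only if it is stable under \emph{pre}-composition $g\mapsto g\circ h$ by arbitrary $h\in\mathcal{H}_b(E,E)$. Assuming $E$ contains a copy of $\ell_1$, I would then choose a normalized sequence $(u_n)$ spanning such a copy and, using that $(u_n)$ admits no weakly Cauchy subsequence (Rosenthal's $\ell_1$-theorem), manufacture a homogeneous polynomial $h\in\mathcal{H}_b(E,E)$ together with a finite-type (hence weakly uniformly continuous) map $g$ so that $g\circ h$ fails to be weakly sequentially continuous along $(u_n)$, and therefore is not weakly uniformly continuous. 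This produces $g\in\mathcal{H}_{wu}(E,E)$ and $h\in\mathcal{H}_b(E,E)$ with $g\circ h\notin\mathcal{H}_{wu}(E,E)$, so $\mathcal{H}_{wu}(E,E)$ is not an ideal.

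For $(1)\Rightarrow(2)$, since the inclusion $\mathcal{H}_{wu}(E,E)\subseteq\mathcal{H}_k(E,E)$ is automatic, I only need the reverse inclusion under the hypothesis that $E$ contains no copy of $\ell_1$. The strategy is to reduce to homogeneous polynomials and then reassemble. Given $f\in\mathcal{H}_k(E,E)$, its Taylor series at the origin $f=\sum_m P_m$ consists of compact polynomials $P_m\in\mathcal{P}(^mE,E)$ by \cite[Proposition 3.4]{AS}. The heart of the matter is to show that, in the absence of $\ell_1$, each such $P_m$ is weakly uniformly continuous on bounded sets; here I would invoke Rosenthal's theorem in the form that every bounded sequence in $E$ has a weakly Cauchy subsequence, together with the Aron–Hervés–Valdivia circle of results identifying weakly sequentially continuous polynomials with weakly uniformly continuous ones precisely when $E\not\supseteq\ell_1$. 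Finally I would pass from the individual $P_m$ back to $f$: by Cauchy estimates, exactly as in the proof of Theorem \ref{Hb comp}, the series $\sum_m P_m$ converges to $f$ uniformly on bounded sets, and since $\mathcal{H}_{wu}(E,E)$ is a $\tau_b$-closed linear subspace of $\mathcal{H}_b(E,E)$, this forces $f\in\mathcal{H}_{wu}(E,E)$.

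The main obstacle is the central step of $(1)\Rightarrow(2)$: upgrading each compact homogeneous polynomial $P_m$ to a weakly uniformly continuous one from the sole hypothesis that $E$ omits $\ell_1$. This is the only place where the structure of $E$ is genuinely used, and it is exactly where Rosenthal's dichotomy has to be married to the polynomial theory; the accompanying passage from homogeneous polynomials to the full mapping is then routine, resting only on the $\tau_b$-closedness of $\mathcal{H}_{wu}(E,E)$ and the uniform convergence of the Taylor series already exploited in Theorem \ref{Hb comp}.
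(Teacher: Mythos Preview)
Your outline is correct and follows the same logical cycle $(2)\Rightarrow(3)\Rightarrow(1)\Rightarrow(2)$ as the paper, but two of the implications are handled more economically there.

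For $(1)\Rightarrow(2)$ the paper avoids the Taylor–series detour entirely: it argues directly that any $f\in\mathcal{H}_k(E,E)$ sends weakly Cauchy sequences to norm Cauchy sequences (otherwise a subsequence would be $\varepsilon$-separated yet have a convergent subsequence by compactness), and then invokes \cite[Proposition~3.3~b)]{AHV}, which applies to arbitrary holomorphic mappings, not only polynomials, to conclude $f\in\mathcal{H}_{wu}(E,E)$. Your decomposition into compact homogeneous components, upgrading each $P_m$ individually, and reassembling via $\tau_b$-closedness is correct but redundant: the AHV result already works at the level of $f$.

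For $(3)\Rightarrow(1)$ you have the right structure (a rank-one $g\in\mathcal{H}_{wu}$ and a polynomial $h\in\mathcal{H}_b$ with $g\circ h\notin\mathcal{H}_{wu}$), but your description of how to ``manufacture'' $h$ is vague. The paper makes this explicit: if $E\supset\ell_1$ there exists a scalar homogeneous polynomial $P:E\to\mathbb{C}$ that is not weakly continuous on bounded sets (citing \cite{Joaquin} or \cite{AGGM}); then with any $a\ne 0$ and $\varphi\in E^*$ satisfying $\varphi(a)=1$, set $h(x)=P(x)a$ and $g=\varphi\otimes a$, so that $g\circ h=h\notin\mathcal{H}_{wu}(E,E)$ while $g\in\mathcal{H}_{wu}(E,E)$. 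Your preliminary remark that left composition $g\mapsto f\circ g$ always preserves $\mathcal{H}_{wu}$ is correct and clarifies why only right composition can fail, though the paper does not isolate this observation.
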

\begin{proof}
 It is always the case that $\mathcal{H}_{wu}(E,E)\subset\mathcal{H}_k(E,E).$
 Every  $f\in \mathcal{H}_k(E,E)$ maps weakly Cauchy sequences $(x_n)\subset E$ into Cauchy sequences. If  this were not the case, there would exist a subsequence $(x_n)$ itself, say, and $\varepsilon >0$ such that $\|f(x_n)-f(x_m)\|>\varepsilon$ for $n\neq m.$ However, since $f$ is compact, $(f(x_n))$ will have a convergent subsequence. Now, under the assumption that $E$  does not contain $\ell_1,$   we obtain that
$f\in\mathcal{H}_{wu}(E,E)$ by  \cite[Proposition 3.3 b)]{AHV}. Thus, $1)$ implies $2).$ Obviously, $2)$ implies $3).$

Suppose that $E$  contains a copy of $\ell_1$.
Then there is a continuous homogeneous polynomial  $P:E \rightarrow \mathbb{C}$ that is not weakly continuous on bounded sets (see \cite[Theorem 4.(e)]{Joaquin} or  \cite[Remark 1.6]{AGGM}).  Choose an element $a \in E \backslash \{0\}$ and define $Q:E\to E$ by $Q(x)=P(x)a.$ It is clear that  $Q$ is an element of $\mathcal{H}_{b}(E,E)$ which is not  weakly uniformly continuous on bounded sets. Pick now $\varphi\in E^*$ such that $\varphi(a)=1$. Since   $Q(x)=\big((\varphi\otimes a)\circ Q\big)(x)$, the  mapping $(\varphi\otimes a)\circ Q$ does not belong to $ \mathcal{H}_{wu}(E,E)$ despite $\varphi\otimes a \in \mathcal{H}_{wu}(E,E).$ This shows that $\mathcal{H}_{wu}(E,E)$ is not an ideal in $\mathcal{H}_{b}(E,E).$ So $3)$ implies $1).$
\end{proof}
It is well-known  that every non-zero ideal  $\mathcal{J}\subset \mathcal{L}(E)$ contains the ideal $\mathcal{F}(E)$ of all     finite rank operators on $E$ (see for instance \cite[p. 160]{A}). Hence every non-zero closed ideal
 in $\mathcal{L}(E)$ contains the closed ideal
 $\mathcal{K}(E)$ of all compact operators on $E$ whenever   $E$ has the approximation property.

 \medskip

The most natural way to connect  ideals  in $\mathcal{L}(E)$ and  ideals in $\mathcal{H}_b(E,E)$ is through differentiation. We study this matter in the sequel.
\begin{prop}
		\label{finite-rank}
	 Let $\mathcal{I}$ be an ideal  in  $\mathcal{H}_b(E,E).$

 1) Then  $\mathcal{I_L}:=\{df(0):f\in\mathcal{I}\}$ is an ideal in $\mathcal{L}(E)$ and $\mathcal{I_L}=\{df(x):f\in\mathcal{I}\;\text{ and } x\in E\}.$

2) If $\mathcal{I}$ is closed and contains some non-constant mapping, then  the operator ideal $\mathcal{F}(E)$ of finite rank operators is contained in $\mathcal{I}$. If moreover,  $E$ has the approximation property, then   $\mathcal{H}_{k}(E,E)\subset \mathcal{I}.$
\end{prop}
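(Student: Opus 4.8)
The plan is to treat 1) with the chain rule and 2) by first showing $\mathcal{I}_L\subseteq\mathcal{I}$ and then raising the polynomial degree through composition. For 1), linearity of $\mathcal{I}_L$ is clear from $d(\lambda f+g)(0)=\lambda\,df(0)+dg(0)$ together with $\mathcal{I}$ being a linear subspace. To see that $\mathcal{I}_L$ is an ideal of $\mathcal{L}(E)$, I would fix $f\in\mathcal{I}$ and $S,R\in\mathcal{L}(E)\subseteq\mathcal{H}_b(E,E)$; then $S\circ f\circ R\in\mathcal{I}$, and since $R(0)=0$ and both $S,R$ are linear, the chain rule gives $d(S\circ f\circ R)(0)=S\circ df(0)\circ R$, so $S\,df(0)\,R\in\mathcal{I}_L$. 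For the equality $\mathcal{I}_L=\{df(x):f\in\mathcal{I},\,x\in E\}$ only the inclusion $\supseteq$ needs work: with the translation $\tau_x(y)=y+x\in\mathcal{H}_b(E,E)$ the ideal property yields $f\circ\tau_x\in\mathcal{I}$, and $d(f\circ\tau_x)(0)=df(x)$, whence $df(x)\in\mathcal{I}_L$.

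For 2), I would first prove that $df(0)\in\mathcal{I}$ for every $f\in\mathcal{I}$, giving $\mathcal{I}_L\subseteq\mathcal{I}$ once operators are identified with the mappings they define. Since the constant $f(0)$ belongs to $\mathcal{I}$, I may assume $f(0)=0$ and expand $f=\sum_{m\ge1}P_m$. For $t\neq0$ the mapping $f_t:=\tfrac1t\,f\circ(tI)$ lies in $\mathcal{I}$, being a scalar multiple of the composition of $f$ with the linear map $tI$, and $f_t=P_1+\sum_{m\ge2}t^{m-1}P_m$. The Cauchy estimates for bounded type mappings give $\sum_m r^m\|P_m\|<\infty$ for each $r>0$, so $f_t\to P_1=df(0)$ in $\tau_b$ as $t\to0$; as $\mathcal{I}$ is closed, $df(0)\in\mathcal{I}$. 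Because $\mathcal{I}$ contains a non-constant mapping, part 1) shows $\mathcal{I}_L\neq\{0\}$, and the quoted fact that every non-zero ideal of $\mathcal{L}(E)$ contains $\mathcal{F}(E)$ yields $\mathcal{F}(E)\subseteq\mathcal{I}_L\subseteq\mathcal{I}$, the first assertion of 2).

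Assume now that $E$ has the approximation property. The crucial device is to raise the degree by composing with finite rank operators: given $\xi\in E^*$, $a\in E$ and $m\in\mathbb{N}$, I pick $v\in E$ and $\mu\in E^*$ with $\mu(v)=1$, so that $R:=\xi\otimes v\in\mathcal{F}(E)\subseteq\mathcal{I}$ and $h(y):=\mu(y)^m a$, which is of bounded type, satisfy $(h\circ R)(x)=\xi(x)^m a$; the ideal property then places the polynomial $x\mapsto\xi(x)^m a$ in $\mathcal{I}$. By polarization and linearity, every finite-type $m$-homogeneous polynomial lies in $\mathcal{I}$. Since $E$ has the approximation property, every compact $m$-homogeneous polynomial is the $\tau_b$-limit of finite-type polynomials \cite{AS} (the polynomial analogue of $\mathcal{K}(E)=\overline{\mathcal{F}(E)}$), so all compact polynomials lie in the closed set $\mathcal{I}$. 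Finally, for $f\in\mathcal{H}_k(E,E)$ the homogeneous Taylor terms are compact polynomials and the partial sums converge to $f$ in $\tau_b$, whence $f\in\mathcal{I}$ and $\mathcal{H}_k(E,E)\subseteq\mathcal{I}$.

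The main obstacle is precisely the passage from finite rank operators to $\mathcal{H}_k(E,E)$. A linear supply of maps is useless here, since compositions of linear maps remain linear; the identity $(h\circ R)(x)=\xi(x)^m a$ is what genuinely increases the degree, and the approximation property is indispensable to reach arbitrary compact polynomials rather than just finite-type ones.
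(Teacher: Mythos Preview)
Your argument is correct. Part 1) and the first half of part 2) (namely $\mathcal{I}_L\subset\mathcal{I}$ and hence $\mathcal{F}(E)\subset\mathcal{I}$) are essentially the paper's own argument: chain rule, translations, and the limit $f_t\to df(x)$ in $\tau_b$.

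The genuine difference is in the approximation-property step. The paper does not pass through polynomials at all: given $f\in\mathcal{H}_k(E,E)$ and a bounded set $B$, it uses the approximation property directly to pick $T\in\mathcal{F}(E)$ with $\Vert T(u)-u\Vert<\varepsilon$ on the relatively compact set $f(B)$, observes that $T\circ f\in\mathcal{I}$ by the ideal property, and concludes $f\in\overline{\mathcal{I}}=\mathcal{I}$. Your route instead builds up the polynomial content of $\mathcal{I}$ degree by degree (the composition $h\circ R$ to raise the degree, polarization to reach all finite-type polynomials, the Aron--Schottenloher approximation of compact polynomials by finite-type ones under AP, and finally the Taylor series of $f$). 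Both arguments are valid; the paper's is considerably shorter and uses AP only in its most elementary form (approximating the identity on a single compact set), while yours invokes more machinery but has the side benefit of exhibiting explicitly that every finite-type and every compact homogeneous polynomial already lies in $\mathcal{I}$.
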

\begin{proof}

	1) Obviously, $\mathcal{I_L}$ is a linear subspace of $\mathcal{L}(E).$ If $T,S\in \mathcal{L}(E)$  and $df(0)\in \mathcal{I_L},$ then $d(T\circ f \circ S)(0)=T\circ df(0)\circ S,$ so $\mathcal{I_L}$ is an ideal in $\mathcal{L}(E)$ because $T\circ f \circ S\in \mathcal{I}.$

Notice as well that $\mathcal{I_L}=\{df(x):f\in\mathcal{I}\;\text{ and } x\in E\}$ since for the translation map $\tau_x$ on $E,$ $f\circ \tau_x \in \mathcal{I}$ and $d(f\circ \tau_x)(0)=df(x)\circ Id=df(x).$

2) Since there is a non-constant mapping $f\in  \mathcal{I},$ there must be $x\in E$ such that $df(x)$ is a non null operator.   Now we check that $df(x)\in  \mathcal{I}.$ For every $t\in\mathbb{C},$ the mapping $y\in E \mapsto x+ty\in E$ belongs to $\mathcal{H}_b(E,E)$  and hence, for every $t\in\mathbb{C}\backslash \{0\}$ the mapping  $f_t $  defined  by $f_t(y)= \frac{1}{t}\big(f(x+ty)-f(x)\big)$  belongs to the ideal $\mathcal{I}.$ Further, $\lim_{t\to 0} f_t=df(x)$ uniformly on bounded subsets of $E.$ Since $\mathcal{I}$ is closed, $df(x)\in \mathcal{I}.$
 Hence     $\mathcal{F}(E) \subset   \mathcal{I}$ as
  $ \mathcal{I} \cap \mathcal{L}(E)  $ is a non-zero  ideal  in   $ \mathcal{L}(E).$


 Assume now that $E$ has the approximation property. For each element  $f  \in  \mathcal{H}_{k}(E,E)$, we are going to check that
 $ f \in \overline{ \mathcal{I}}.$ Fix $\varepsilon > 0$ and  a bounded set  $B \subset E$. Since $f(B)$ is relatively compact and $E$ has the approximation property, there is  $T \in \mathcal{F}(E) $ such that $\Vert T(u) - u \Vert < \varepsilon$ for each $u \in f(B)$. So
	$$
	\Vert T(f(x)) - f(x) \Vert < \varepsilon, \sep \forall x \in B.
	$$
	Since $T \in \mathcal{F}(E) \subset \mathcal{I} $ and $f \in \mathcal{H}_{b}(E,E)$ we have that  $ T \circ f \in  \mathcal{I} $ for every $f \in \mathcal{H}_k(E,E)$. Therefore,  $ f \in \overline{ \mathcal{I}},$ and so,  $ \mathcal{H}_k(E,E)\subset \overline{ \mathcal{I}}= \mathcal{I}.$\end{proof}
 In particular, the only  closed ideal $ \mathcal{I}$ in $\mathcal{H}_b(E,E)$ such that $\mathcal{I} \cap \mathcal{L}(E) = \{0\}$ is the ideal  of  constant mappings.
\medskip

 A classical result  due to  J.W. Calkin   \cite[Theorem 1.3]{Ca}  states  that the ideal $\mathcal{K}(\ell_2)$
 of compact linear operators on $\ell_2$ is the only non-trivial, closed ideal in the algebra $\mathcal{L}(\ell_2)$. Later this result was generalized to $\ell_p $ ($1 \leq p < \infty$) and $c_0$ by I.C. Gohberg,  A.S. Markus  and I.A. Fel'dman    \cite[Theorem 5.1]{FGM} (see also \cite[Theorem 5.2.2]{Pi}).  Next we are going to study the structure of the proper closed  ideals in $\mathcal{H}_b(E,E)$ in
  case that  $E=\ell_p\;(1 \leq p < \infty)$ or  $E= c_0$.  We will see that  the structure of the proper closed ideals in $\mathcal{H}_b(E,E)$ may be  quite different from the structure of the non-trivial closed ideals in the algebra $\mathcal{L}(E).$

 \begin{prop}
 	\label{[J]}
 If $E$ is a complex Banach space and $\mathcal{J}$ is a non-trivial closed ideal in $\mathcal{L}(E),$ then the set
 $$
 [\mathcal{J}]:= \{f \in \mathcal{H}_{b}(E,E) : df(x) \in \mathcal{J} \;\; \text {for all} \;\; x \in E\}
 $$
 is a   proper closed ideal in $\mathcal{H}_{b}(E,E)$ containing $\mathcal{J}.$ For the ideal of compact operators $\mathcal{K}(E)$,  the inclusion $[\mathcal{K}(E)]\supset \mathcal{H}_k(E,E)$ holds.

In particular, if $E=c_0$ or $\ell_p \;\;(1 \leq p < \infty)$, then $[\mathcal{K}(E)]$  contains strictly $\mathcal{H}_k(E,E)$.
\end{prop}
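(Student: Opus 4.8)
The statement breaks into several independent assertions, and the plan is to dispatch the algebraic ones quickly via the chain rule and then concentrate on closedness and on the strict inclusion. First, $[\mathcal{J}]$ is a linear subspace because $d(\lambda f+\mu g)(x)=\lambda\, df(x)+\mu\, dg(x)$ and $\mathcal{J}$ is a subspace. To see that it is an ideal I would take $g\in[\mathcal{J}]$ and $f,h\in\mathcal{H}_b(E,E)$ and apply the chain rule twice: $d(f\circ g\circ h)(x)=df(g(h(x)))\circ dg(h(x))\circ dh(x)$. The middle factor $dg(h(x))$ lies in $\mathcal{J}$ and the two outer factors lie in $\mathcal{L}(E)$, so the composition lies in $\mathcal{J}$ because $\mathcal{J}$ is an ideal of $\mathcal{L}(E)$, whence $f\circ g\circ h\in[\mathcal{J}]$. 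The inclusion $\mathcal{J}\subset[\mathcal{J}]$ is clear: any $T\in\mathcal{J}\subset\mathcal{L}(E)\subset\mathcal{H}_b(E,E)$ satisfies $dT(x)=T\in\mathcal{J}$ for every $x$. Finally $[\mathcal{J}]$ is proper, since $d(\mathrm{Id})(x)=\mathrm{Id}\notin\mathcal{J}$ (a proper ideal cannot contain the identity), so $\mathrm{Id}\notin[\mathcal{J}]$.

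The step I expect to cost the most is closedness. As $\mathcal{H}_b(E,E)$ is Fréchet it is enough to argue with sequences: let $f_n\to f$ in $\tau_b$ with $f_n\in[\mathcal{J}]$ and fix $x\in E$. For any $r>0$, Cauchy's inequalities applied to the degree-one term of the Taylor series of $f_n-f$ at $x$ yield $\|df_n(x)-df(x)\|\le r^{-1}\sup_{\|u\|\le r}\|(f_n-f)(x+u)\|$, and the right-hand side tends to $0$ because $f_n\to f$ uniformly on the bounded ball $B(x,r)$. Hence $df_n(x)\to df(x)$ in operator norm; since each $df_n(x)$ lies in the norm-closed set $\mathcal{J}$, so does $df(x)$, for every $x$, i.e.\ $f\in[\mathcal{J}]$.

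For the ideal $\mathcal{K}(E)$, the inclusion $\mathcal{H}_k(E,E)\subset[\mathcal{K}(E)]$ is immediate from the fact recorded just before the proposition that $df(x)$ is compact for every $f\in\mathcal{H}_k(E,E)$ and every $x$. To show that this inclusion is strict when $E=c_0$ or $E=\ell_p$ $(1\le p<\infty)$, I would produce a single explicit witness, namely $f(x):=(x_n^2)_n$ for $x=(x_n)$. This is a bounded $2$-homogeneous polynomial from $E$ to $E$, hence lies in $\mathcal{H}_b(E,E)$; its derivative $df(x)(y)=(2x_ny_n)_n$ is the diagonal operator with diagonal $(2x_n)_n\to 0$, which is compact, so $f\in[\mathcal{K}(E)]$. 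On the other hand $f(e_n)=e_n$ for the unit vectors $e_n$, and $\{e_n\}$ is bounded with no convergent subsequence, so $f$ carries a bounded set to a non-relatively-compact one and therefore $f\notin\mathcal{H}_k(E,E)$. Beyond the Cauchy estimate underlying closedness, the only real work is verifying that this one $f$ simultaneously has compact derivatives yet fails to belong to $\mathcal{H}_k(E,E)$; every other step reduces to a direct application of the chain rule or of the definitions.
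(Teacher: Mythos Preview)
Your argument is correct and follows essentially the same route as the paper: chain rule for the ideal property, $\mathrm{Id}\notin[\mathcal{J}]$ for properness, Cauchy estimates for closedness, and an explicit diagonal-type polynomial as the witness for strict inclusion. One minor difference is worth noting: the paper treats $c_0$ and $\ell_1$ with the squaring map $(x_n)\mapsto(x_n^2)$ but switches to $(x_n)\mapsto(x_n^q)$ with an integer $q\ge p$ for $\ell_p$, $1<p<\infty$, and then runs a separate H\"older estimate; your choice of the single map $f(x)=(x_n^2)$ works uniformly in all cases, since $\|(x_ny_n)\|_p\le\|x\|_p\|y\|_p$ already gives a bounded bilinear form and $x_n\to 0$ guarantees compactness of the diagonal derivative, so the case split in the paper is unnecessary.
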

\begin{proof}
The inclusion   $\mathcal{J} \subset [\mathcal{J}]$ is trivially satisfied. Let us show that $[\mathcal{J}]$ is a closed  ideal in  $\mathcal{H}_{b}(E,E)$.  Indeed, for every
$f,g \in [\mathcal{J}]$, $\lambda \in \C$ and
$x \in E$,  we have $d(f \circ g)(x)= df(g(x)) \circ dg(x) \in \mathcal{J}$ and  $d(f + \lambda g)(x)= df(x) + \lambda dg(x) \in \mathcal{J}$ since by hypothesis $\mathcal{J}$ is an ideal in $\mathcal{L}(E).$  Moreover, given any $(f_n)_n \in [\mathcal{J}]$  such that  $\bigl(f_n \bigr)\rightarrow f$ in $\mathcal{H}_{b}(E,E),$  we have that $(df_n(x))_n \subset \mathcal{J}$ and , according to Cauchy's integral formula, $\bigl(df_n(x) \bigl) \rightarrow df(x)$ in $\mathcal{L}(E)$ for all $x \in E.$ This implies $f \in [\mathcal{J}]$ since, by hypothesis, $\mathcal{J}$ is closed in $\mathcal{L}(E).$   Finally, $ [\mathcal{J}] \subsetneqq  \mathcal{H}_{b}(E,E)$ as clearly $Id_E \not\in [\mathcal{J}].$

We noticed above that in case that  $f\in  \mathcal{H}_k(E,E) \text{ and } x\in E,$ then  $df(x)$ is a compact operator. Thus $ \mathcal{H}_k(E,E) \subset [\mathcal{K}(E)].$
       This completes the proof of the first statement.

Next  we prove  that
     $[\mathcal{K}(E)] \varsupsetneqq \mathcal{H}_k(E,E),$  whenever $E=c_0$ or $E=\ell_p \;\;(1 \leq p < \infty).$

If $E=c_0,$ let $P: c_0 \rightarrow c_0$ be defined by $ P((x_n)_n)=(x_n^2)_n$ for each $(x_n)_n \in c_0.$ Clearly $P \in \mathcal{H}_{b}(c_0,c_0).$ If $(e_n)$  is the canonical basis of $c_0$, then $P(e_n)=e_n$ for each $n\in \N$  and so $P \not\in \mathcal{H}_k(c_0,c_0).$ Moreover, for each $x=(x_n)_n \in c_0$ the mapping $dP(x): c_0 \rightarrow c_0$ is given by $dP(x)(y)=(2x_ny_n)$ for every $y=(y_n)_n \in c_0.$ We claim that $dP(x) \in \mathcal{K}(c_0).$ Indeed, given $\varepsilon  > 0$ there exists $n_0 \in \mathbb{N}$ such that $\vert x_n \vert < \varepsilon $ for every $n > n_0.$ So,  for every $y=(y_n)_n \in B_{c_0},$ we have $dP(x)(y)=(2x_ny_n)_n = (2x_1y_1, \ldots, 2x_{n_0}y_{n_0}, 0 , 0,\ldots) + (0, \ldots ,0,2x_{n_0+1}y_{n_0+1}, 2x_{n_0+2}y_{n_0+2}, \ldots) \in K + 2\varepsilon  B_{c_0}$ where $K=\{(2x_1y_1, \ \ldots,2x_{n_0}y_{n_0}, 0 , 0,\ldots): y=(y_n)_n \in B_{c_0} \}$ is compact. So, $dP(x)(B_{c_0})$ is relatively compact i.e., $dP(x) \in \mathcal{K}(c_0)$ and this is true for all $x \ c_0.$ Consequently $P \in [\mathcal{K}(c_0)] \backslash \mathcal{H}_k(c_0,c_0)$ and so $[\mathcal{K}(c_0)]$  is a proper closed ideal  in $\mathcal{H}_{b}(c_0,c_0)$ that contains strictly the ideal $\mathcal{H}_k(c_0,c_0).$

If $E=\ell_1,$ the same argument used above shows that $[\mathcal{K}(\ell_1)]$  is a proper closed ideal  in $\mathcal{H}_{b}(\ell_1,\ell_1)$ that contains strictly the ideal $\mathcal{H}_k(\ell_1,\ell_1).$

If $E=\ell_p \;\;(1 < p < \infty ),$
take $q \in \mathbb{N}$ such that $q \geq p$ and define $P: \ell_p \rightarrow \ell_p$ by $ P((x_n)_n)=(x_n^q)_n$ for every $(x_n)_n \in \ell_p$. It is clear that $P \in \mathcal{H}_{b}(\ell_p,\ell_p)$. The polynomial $P$ preserves the canonical basis of $\ell_p$  and so $P \not\in \mathcal{H}_k(\ell_p, \ell_p)$. Moreover, for each $x=(x_n)_n \in \ell_p$ the mapping $dP(x): \ell_p \rightarrow \ell_p$ is given by $dP(x)(y)=(qx_n^{q-2} x_n y_n)_n$ for every $y=(y_n)_n \in \ell_p.$ We claim that $dP(x) \in \mathcal{K}(\ell_p).$
Indeed, given $0 < \varepsilon  < 1$ there exists $n_0 \in \mathbb{N}$ such that $
 \bigl( \sum _{k=n_0+1} ^\infty  \vert x_k \vert ^q \bigr) ^\frac{q-1}{q} < \frac{\varepsilon}{q}$. For every $y=(y_n)_n \in B_{\ell_p},$ we have $dP(x)(y)=(q x_n^{q-2} x_n y_n)_n = q(x_1^{q-1} y_1, \ldots, x_{n_0}^{q-1}y_{n_0}, 0 , 0,\ldots) + q(0, \ldots,0, x_{n_0+1}^{q-2}x_{n_0+1}y_{n_0+1}, x_{n_0+2}^{q-2}x_{n_0+2}y_{n_0+2}, \ldots).$
Note that the subset $K$ given by
 $$
 K=\{(qx_1^{q-1} y_1, \ldots, qx_{n_0}^{q-1}y_{n_0}, 0 , 0,\ldots): y=(y_n)_n \in B_{\ell_p}\}
 $$
is compact and
$$
\sum_{k=n_0+1}^\infty \vert x_k ^{ q-1}  y_k \vert \le \Bigl(\sum_{k=n_0+1}^\infty \vert x_k \vert^q \Bigr)^{\frac{q-1}{q}} \Bigl(\sum_{k=n_0+1}^\infty \vert y_k \vert^q \Bigr)^{\frac{1}{q}} \le
\frac{\varepsilon}{q} \Vert y \Vert _q \le
\frac{\varepsilon}{q}.
$$
Hence, for every $y=(y_n)_n \in B_{\ell_p},$ we have $dP(x)(y) \in K +   \varepsilon  B_{\ell_p}$ and, consequently,  $dP(x)(B_{\ell_p})$ is relatively compact. We proved that  $dP(x) \in \mathcal{K}(\ell_p)$ every  $x \in \ell_p$, that is, $P \in [\mathcal{K}(\ell_p)]$. Since $P \notin  \mathcal{H}_k(\ell_p,\ell_p)$,  $[\mathcal{K}(\ell_p)]$  is a proper closed ideal in $\mathcal{H}_{b}(\ell_p,\ell_p)$ that contains strictly the ideal $\mathcal{H}_k(\ell_p,\ell_p)$.
\end{proof}

\begin{prop}
	\label{[M]}
	 If $\mathcal{I}$ is an ideal in $\mathcal{H}_{b}(E,E),$  then  $\mathcal{I}\subset [\mathcal{I_L}].$ If $\mathcal{J}$ is an ideal in $\mathcal{L}(E),$ then $[\mathcal{J}]_{\mathcal{L}}=\mathcal{J}$. Further, if there is a largest non-trivial ideal  $\mathcal{M}$ in $\mathcal{L}(E),$ then $[\mathcal{M}]$ is closed and it is the largest proper ideal in $\mathcal{H}_{b}(E,E)$.
\end{prop}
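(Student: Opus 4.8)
The plan is to dispatch the three assertions separately, the first two being direct consequences of the material already established and the third carrying all the weight.

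For the inclusion $\mathcal{I}\subseteq[\mathcal{I_L}]$ I would only unwind definitions. By Proposition \ref{finite-rank}(1) one has $\mathcal{I_L}=\{df(x):f\in\mathcal{I},\ x\in E\}$, so for every $f\in\mathcal{I}$ and every $x\in E$ the operator $df(x)$ lies in $\mathcal{I_L}$; that is precisely the statement $f\in[\mathcal{I_L}]$. For the identity $[\mathcal{J}]_{\mathcal{L}}=\mathcal{J}$, the inclusion $[\mathcal{J}]_{\mathcal{L}}\subseteq\mathcal{J}$ is immediate, since by definition every $f\in[\mathcal{J}]$ has $df(0)\in\mathcal{J}$. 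For the reverse inclusion I would use that a linear operator $T\in\mathcal{J}\subseteq\mathcal{L}(E)\subseteq\mathcal{H}_b(E,E)$ satisfies $dT(x)=T\in\mathcal{J}$ for all $x$, hence $T\in[\mathcal{J}]$ and therefore $T=dT(0)\in[\mathcal{J}]_{\mathcal{L}}$.

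For the third assertion I would first record that $\mathcal{M}$ is automatically closed. Its closure $\overline{\mathcal{M}}$ is again an ideal of $\mathcal{L}(E)$, and it is proper, because a dense ideal of the unital Banach algebra $\mathcal{L}(E)$ would contain an operator within distance $<1$ of $Id_E$, hence an invertible operator, forcing the ideal to be all of $\mathcal{L}(E)$; since $\mathcal{M}$ is the largest proper ideal and $\mathcal{M}\subseteq\overline{\mathcal{M}}\subsetneq\mathcal{L}(E)$, maximality gives $\overline{\mathcal{M}}=\mathcal{M}$. With $\mathcal{M}$ closed and non-trivial, Proposition \ref{[J]} shows that $[\mathcal{M}]$ is a proper closed ideal, which settles closedness and properness. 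To see that it is the \emph{largest} proper ideal I would take an arbitrary proper ideal $\mathcal{I}$ and, using $\mathcal{I}\subseteq[\mathcal{I_L}]$ from the first assertion together with the evident monotonicity of $\mathcal{J}\mapsto[\mathcal{J}]$, reduce the goal $\mathcal{I}\subseteq[\mathcal{M}]$ to the single containment $\mathcal{I_L}\subseteq\mathcal{M}$. As $\mathcal{I_L}$ is an ideal of $\mathcal{L}(E)$ and $\mathcal{M}$ is the largest proper one, this holds as soon as $\mathcal{I_L}\neq\mathcal{L}(E)$; so everything reduces to showing that a proper ideal $\mathcal{I}$ cannot satisfy $\mathcal{I_L}=\mathcal{L}(E)$.

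I would prove this last point by contraposition, deducing $\mathcal{I}=\mathcal{H}_b(E,E)$ from $\mathcal{I_L}=\mathcal{L}(E)$. Since $\mathcal{I_L}$ is then an ideal of $\mathcal{L}(E)$ containing $Id_E$, some $f\in\mathcal{I}$ and $a\in E$ satisfy $df(a)=Id_E$; composing $f$ on the right with the translation $\tau_a$ and subtracting the constant $f(a)$ (which lies in $\mathcal{I}$) yields $\phi\in\mathcal{I}$ with $\phi(0)=0$ and $d\phi(0)=Id_E$. Writing $\phi=\sum_{k\ge1}P_k$ with $P_1=Id_E$, the dilations $\phi_\lambda:=\tfrac1\lambda\,\phi(\lambda\,\cdot)=\sum_{k\ge1}\lambda^{k-1}P_k$ again belong to $\mathcal{I}$ for $\lambda\neq0$ (right composition with $\lambda\,Id_E$, followed by scalar multiplication), and either finite combinations of these or the iterated telescoping identity $2\phi-\phi\circ\phi=Id_E+(\text{terms of order}\ge3)$ produce elements of $\mathcal{I}$ of the form $Id_E+(\text{terms of arbitrarily high order})$. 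Once $Id_E\in\mathcal{I}$ one has $h=h\circ Id_E\circ Id_E\in\mathcal{I}$ for every $h$, so $\mathcal{I}=\mathcal{H}_b(E,E)$, the desired contradiction. The main obstacle is exactly the passage from this family to $Id_E\in\mathcal{I}$: for a \emph{polynomial} witness $\phi$ the extraction is genuinely finite, since a Vandermonde combination of finitely many $\phi_{\lambda_j}$ isolates $P_1=Id_E$; for a general $\phi$ I expect the technical heart of the proof to be the separation of the linear term from its high-order tail, controlling the latter uniformly on bounded sets so that the isolation can be carried out inside $\mathcal{I}$.
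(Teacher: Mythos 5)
Your overall architecture is exactly the paper's: part one is read off from Proposition \ref{finite-rank}(1); part two is the two evident inclusions (the paper dismisses it as ``easily checked'', and your verification is correct); and for part three the paper argues, just as you do, that $\mathcal{M}$ is closed (your Banach-algebra argument --- the closure of a proper ideal in $\mathcal{L}(E)$ avoids the invertibles, so maximality forces $\overline{\mathcal{M}}=\mathcal{M}$ --- is the standard justification the paper leaves implicit), invokes Proposition \ref{[J]} for closedness and properness of $[\mathcal{M}]$, and deduces maximality from the chain $\mathcal{I}\subset[\mathcal{I_L}]\subset[\mathcal{M}]$, i.e.\ from the containment $\mathcal{I_L}\subset\mathcal{M}$ plus monotonicity of $\mathcal{J}\mapsto[\mathcal{J}]$. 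The gap is precisely the one you flagged yourself and did not close: this reduction needs $\mathcal{I_L}\neq\mathcal{L}(E)$ for every proper ideal $\mathcal{I}$, equivalently that $Id_E\in\mathcal{I_L}$ forces $\mathcal{I}=\mathcal{H}_b(E,E)$. Your extraction of $Id_E$ from $\phi=Id_E+\sum_{k\ge2}P_k\in\mathcal{I}$ works only in special cases: the Vandermonde combination of dilations isolates $P_1$ only when $\phi$ is a polynomial, and the telescoping $2\phi-\phi\circ\phi$ merely raises the order of tangency, producing elements whose expansion is $Id_E$ plus terms of order at least $n$ for each $n$, never $Id_E$ itself; since $\mathcal{I}$ is not assumed closed, no limit may be taken. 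So, as written, your argument proves the maximality claim only for closed ideals (or for ideals whose witnesses can be taken polynomial).

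Two remarks to put this in perspective. First, your dilation family does settle the closed case completely: since $\phi$ is of bounded type, Cauchy's inequalities give $\sum_{k\ge2}\sup_{\Vert x\Vert\le R}\Vert P_k(x)\Vert<\infty$ for every $R$, whence $\sup_{\Vert x\Vert\le R}\Vert\phi_\lambda(x)-x\Vert\le \vert\lambda\vert\sum_{k\ge2}\sup_{\Vert x\Vert\le R}\Vert P_k(x)\Vert\to0$ as $\lambda\to0$; thus $Id_E\in\overline{\mathcal{I}}^{\tau_b}$, so no proper \emph{closed} ideal can have $\mathcal{I_L}=\mathcal{L}(E)$, and $[\mathcal{M}]$ contains every proper closed ideal --- which is the form in which the proposition is actually applied later in the paper (e.g.\ to $[\mathcal{K}(E)]$ for $E=c_0,\ell_p$). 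Second, you should know that the paper's own proof does not fill the gap either: it simply asserts ``for any ideal $\mathcal{I}$, the ideal $\mathcal{I_L}$ lies inside $\mathcal{M}$'', which presupposes exactly the properness of $\mathcal{I_L}$ that stalled you (the containment is automatic from maximality of $\mathcal{M}$ \emph{only once} $\mathcal{I_L}\neq\mathcal{L}(E)$ is known). So your proposal is faithful to the source's argument and in fact more scrupulous; the missing step is genuine, but it is equally missing from the paper, and recovering the literal statement --- largest among \emph{all} proper ideals, closed or not --- would require precisely the exact (limit-free) isolation of the linear term that you correctly identified as the technical heart.
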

\begin{proof}
If $f\in\mathcal{I},$ then according to Proposition \ref{finite-rank},  $df(x)\in\mathcal{I_L}$ for all $x\in E,$ that is $f\in [\mathcal{I_L}].$
The second assertion is also easily checked.
For the third statement, notice that for any ideal $\mathcal{I},$ the ideal $\mathcal{I_L}$ lies inside $\mathcal{M},$ hence $\mathcal{I}\subset[\mathcal{I_L}]\subset [\mathcal{M}]$.   That $ [\mathcal{M}]$ is closed follows from the fact that $\mathcal{M}$ is closed as the largest non-trivial ideal   in $\mathcal{L}(E)$ plus Proposition \ref{[J]}.
\end{proof}

Notice that  in view of the  previous result we have that
$H_k(E,E)_\mathcal{L}=\mathcal{K}(E)=[\mathcal{K}(E)]_\mathcal{L}.$  Since it might happen that   $H_k(E,E) \ne [\mathcal{K}(E)]$, it turns out that  an ideal $\mathcal{I}
\subset \mathcal{H}_{b}(E,E)$ cannot in general be recovered  from the derivatives part $\mathcal{I}_\mathcal{L}.$
\medskip

Proposition \ref{[M]} allows us to obtain examples of spaces $E$  where $[\mathcal{K}(E)]$ is closed and it is the largest  proper ideal in $\mathcal{H}_{b}(E,E).$ Indeed,  as proved in \cite[Theorem 5.1]{FGM} (see also \cite[Theorem 5.2.2]{Pi}) in the cases $E=c_0$ or $\ell_p \;\;(1 \leq p < \infty)$ we have that  $\mathcal{K}(E)$ is the largest closed non-trivial ideal in $\mathcal{L}(E)$ and so $[\mathcal{K}(E)]$ is the largest
proper closed ideal in $\mathcal{H}_{b}(E,E)$.
 Aside the closed ideal of constant mappings, there are at least two proper closed ideals for these spaces, namely  $\mathcal{H}_{k}(E,E)$ and $[\mathcal{K}(E)],$ and any closed ideal $\mathcal{I}$ lies between them.  We do not know  whether there exists  any  other proper closed ideal in these cases.

 Argyros and Haydon constructed in \cite{AH} an hereditary indecomposable $\mathcal{L}_\infty$-space $\mathfrak{X}_K$ that solves the scalar-plus-compact problem and consequently this space has the property that $\mathcal{K}(\mathfrak{X}_K)$ is the only non-trivial closed ideal in $\mathcal{L}(\mathfrak{X}_K)$ (see \cite[Theorem 7.4]{AH}). So $[\mathcal{K}(\mathfrak{X}_K)]$ is closed and it is  the largest proper ideal in $\mathcal{H}_{b}(\mathfrak{X}_K,\mathfrak{X}_K).$

 Also the spaces $E=\ell_\infty$ and $E=J_p \; (1<p<\infty)$ (where $J_p$ is the p-th James space), enjoy the property that $\mathcal{WK}(E)$ is the largest non-trivial closed ideal in $\mathcal{L}(E)$ (see \cite[p. 253]{LL}  and  \cite[Theorem 4.16]{La}, respectively) and so, by  Proposition \ref{[M]},  $[\mathcal{WK}(\ell_\infty)]$ is closed and it is  the largest proper ideal in $\mathcal{H}_{b}(\ell_\infty,\ell_\infty)$ and $[\mathcal{WK}(J_p)]$ is closed and it is  the largest proper ideal in $\mathcal{H}_{b}(J_p,J_p).$

 It is said that $S \in \mathcal{L}(E)$ factors through $T \in \mathcal{L}(E)$ if there exist $A,B \in \mathcal{L}(E)$ such that $S=ATB.$ The set of all $T \in \mathcal{L}(E)$ so that the identity operator $Id_E$ on $E$ does not factor through $T$ is usually denoted by $\mathcal{M}_E.$ It is easy to see that $\mathcal{M}_E$ is a proper ideal in $\mathcal{L}(E)$ whenever  $\mathcal{M}_E+\mathcal{M}_E \subset \mathcal{M}_E.$   Moreover, if $\mathcal{M}_E$  is an ideal in $\mathcal{L}(E)$ then it is automatically the largest proper ideal in $\mathcal{L}(E)$ and hence is closed.
 Indeed, given any proper ideal $\mathcal{I}$ in $\mathcal{L}(E),$ clearly $ATB \neq Id_E$ for every $T \in \mathcal{I}$ and for all $A,B \in \mathcal{L}(E).$ Hence, $\mathcal{I} \subset \mathcal{M}_E.$

It follows from Proposition \ref{[M]} that $[\mathcal{M}_E]$ is the largest closed proper ideal in $\mathcal{H}_{b}(E,E)$ whenever
 $\mathcal{M}_E$ is an ideal in $\mathcal{L}(E).$ Moreover, clearly $\mathcal{[M}_E] \cap\mathcal{L}(E)=\mathcal{M}_E$.
Besides the Banach spaces mentioned above, there are others for which $[\mathcal{M}_E]$ is the largest closed proper ideal in $\mathcal{H}_{b}(E,E).$ For instance,
if $E=(\bigoplus_{k=1}^\infty \ell_2^k)_{c_0},$  the closure of the ideal of operators that factor
through $c_0\;\;$ coincides with $\mathcal{M}_E$
\cite[Theorem 5.5]{LLR}.
Also if $E= (\bigoplus_{k=1}^\infty \ell_2^k)_{\ell_1},$
  the closure of the ideal of operators that factor through $\ell_1\;$ coincides with $\mathcal{M}_E$ (\cite[Corollary 2.11]{LSZ}).
There are quite a number of spaces $E$ for which $\mathcal{M}_E$ is the unique maximal ideal in
$\mathcal{L}(E)$ and hence the largest one. For instance,
 this is the case of $L_p \;\;(1 \leq p < \infty),\;\mathcal{C}([0,\omega  _1]),\;
 (\Sigma \ell_q)_{\ell_p} \;\;(1 \leq q < p < \infty)\;$ and the Lorentz sequence spaces $\;d_{\omega,p}\;\; (1 \le p \; <\infty) $
(cf. \cite{DJS}, \cite{KL}, \cite{CJZ} and \cite{KPSTT}).

We say that the $Id_E$ factors through $f \in \mathcal{H}_b(E,E)$ if there exist $g,h \in \mathcal{H}_b(E,E)$ such that $Id_E = g \circ f \circ h.$
As in the linear case, we can show that the set $\mathcal{I}_E$ of all $f \in \mathcal{H}_b(E,E)$ so that the $Id_E$ does not factor through $f$ is the largest proper ideal in $\mathcal{H}_b(E,E)$ whenever it is closed under addition in $\mathcal{H}_b(E,E)$.  By using the chain rule, we get that $\mathcal{I}_E \cap \mathcal{L}(E) = \mathcal{M}_E$ and that $\mathcal{I}_E\supset [\mathcal{M}_E].$ Further, if  $\mathcal{I}_E$ is an ideal, then $\mathcal{M}_E$ is an ideal as well and consequently  $\mathcal{I}_E=[\mathcal{M}_E]$.

\bigskip

It is immediate from Proposition \ref{cont-ac} that for any ideal $\mathcal{I}\subset \mathcal{H}_b(E,E),$ its closure $\overline{\mathcal{I}}$ is also an ideal.  Next we will show that the same property  holds for $\overline{\mathcal{I}}^{\tau_0} \cap \mathcal{H}_{b}(E,E).$

 \begin{lem}
\label{le}
Let  $E$ be a Banach space and $U$  an open subset of $E$. If $g \in \mathcal{H}(U,E)$ and $(g_ i)_{i \in I}$
is a net in $\mathcal{H}(U,E)$  such that $(g_i) \stackrel{\tau_0}{\rightarrow} g$, then given any compact
subset $K$ of $U$ and any open set $V$ containing $g(K)$,  there exists $i_0 \in I$ such that $g_i(K) \subset
V $,   for all $i \ge  i_0$.
\end{lem}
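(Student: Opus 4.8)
The plan is to reduce the statement to the standard metric fact that a compact set and a disjoint closed set are separated by a positive distance, and then to use that $\tau_0$-convergence is precisely uniform convergence on $K$. First I would dispose of the trivial cases: if $K=\emptyset$ there is nothing to prove, and if $V=E$ then $g_i(K)\subset E=V$ for every $i\in I$ since each $g_i$ takes values in $E$. So I may assume that $K\neq\emptyset$ and $V\neq E$.

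Next, since $g$ is continuous and $K$ is compact, $g(K)$ is a nonempty compact subset of $V$, while $E\setminus V$ is a nonempty closed set disjoint from $g(K)$. The crucial quantity is
$$
\eta:=\dist\bigl(g(K),E\setminus V\bigr)=\inf\{\|y-z\|:\ y\in g(K),\ z\in E\setminus V\}.
$$
I would argue that $\eta>0$ as follows: the map $y\mapsto \dist(y,E\setminus V)$ is ($1$-Lipschitz, hence) continuous, and it vanishes exactly on $\overline{E\setminus V}=E\setminus V$, so it is strictly positive on $V$; being continuous and positive on the compact set $g(K)\subset V$, it attains a positive minimum there, which is $\eta$. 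This is the one genuinely non-routine ingredient, and it is precisely where compactness of $g(K)$ (and closedness of $E\setminus V$) is essential; everything else is bookkeeping.

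Finally I would invoke the convergence. Because $(g_i)\stackrel{\tau_0}{\rightarrow}g$ and $K\subset U$ is compact, there is $i_0\in I$ such that $p_K(g_i-g)=\max_{x\in K}\|g_i(x)-g(x)\|<\eta$ for all $i\ge i_0$. Fix such an $i$ and any $x\in K$. If we had $g_i(x)\in E\setminus V$, then $\dist\bigl(g(x),E\setminus V\bigr)\le\|g(x)-g_i(x)\|<\eta$, contradicting $\dist\bigl(g(x),E\setminus V\bigr)\ge\eta$ (valid since $g(x)\in g(K)$). Hence $g_i(x)\in V$ for every $x\in K$, that is, $g_i(K)\subset V$ for all $i\ge i_0$, which is exactly the assertion.
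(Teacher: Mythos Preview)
Your proof is correct and follows essentially the same route as the paper: both arguments use compactness of $g(K)$ to obtain a positive ``gap'' $\eta$ (equivalently, an $\varepsilon>0$ with $g(K)+\varepsilon B_E\subset V$) and then invoke uniform convergence on $K$ to push $g_i(K)$ into $V$. The only differences are cosmetic---you handle the trivial cases explicitly and phrase the final step by contradiction via the distance function, whereas the paper writes the containment $g_i(x)\in g(x)+\varepsilon B_E\subset V$ directly.
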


\begin{proof}
Assume that $K \subset U$ is a compact set.  Since $g(K)$ is compact, and $V$ an open set such that $g(K)
\subset V$, there is $\varepsilon > 0$ such that $g(K) + \varepsilon B_E \subset V$. By assumption  $(g_i)
\stackrel{\tau_0}{\rightarrow} g$, so there is $i_0 \in I$ satisfying that
$$
\Vert g_i(x)- g(x) \Vert < \varepsilon, \;\; \text{for all};\;  i \ge i_0,\;\; \text{for all}\;\; x \in K.
$$
Hence for every $i \ge i_0$ and $x \in K$ we have that
$$
g_i(x) \in g(x)+ \varepsilon  B_E \subset g(K)  + \varepsilon B_E \subset V,
$$
that is $g_i(K) \subset V$ for every $i \ge i_0$.
\end{proof}

\begin{prop}\label{comp-ab}
 If $E$ is a complex Banach space and $\mathcal{I}$ is an ideal in $\mathcal{H}_{b}(E,E),$ then $\mathcal{W}= \overline{\mathcal{I}}^{\tau_0} \cap \mathcal{H}_{b}(E,E)$ is also an ideal in $\mathcal{H}_{b}(E,E).$ If
  moreover,  $\mathcal{I}=[\mathcal{J}]$ for a non-trivial $\tau_0$-closed ideal
  $\mathcal{J}\subset\mathcal{L}(E)$, then $\mathcal{W}$ is also a proper ideal.
  \end{prop}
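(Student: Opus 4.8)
The plan is to treat the two assertions separately, relying throughout on facts already established: that $\mathcal{I}$ is an ideal, that members of $\mathcal{H}_b(E,E)$ are uniformly continuous on bounded sets (noted in the proof of Proposition \ref{cont-ac}), and the transfer Lemma \ref{le}. For the first assertion I first observe that $\mathcal{W}$ is automatically a linear subspace: $\overline{\mathcal{I}}^{\tau_0}$ is the $\tau_0$-closure of a subspace in the vector topology $\tau_0$, hence a subspace, and $\mathcal{H}_b(E,E)$ is a subspace, so their intersection is one too. It therefore remains to verify the two-sided absorption property. Fix $g\in\mathcal{W}$ and $f,h\in\mathcal{H}_b(E,E)$, and choose a net $(g_i)\subset\mathcal{I}$ with $g_i\stackrel{\tau_0}{\to}g$. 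Because $\mathcal{I}$ is an ideal we have $f\circ g_i\circ h\in\mathcal{I}$ for every $i$, and since $f,g,h\in\mathcal{H}_b(E,E)$ the composite $f\circ g\circ h$ again lies in $\mathcal{H}_b(E,E)$. Hence the whole matter reduces to proving the convergence $f\circ g_i\circ h\stackrel{\tau_0}{\to}f\circ g\circ h$, after which $f\circ g\circ h\in\overline{\mathcal{I}}^{\tau_0}\cap\mathcal{H}_b(E,E)=\mathcal{W}$.

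I would establish that convergence in two stages. Right composition is the easy stage: for a compact $K\subset E$ the image $h(K)$ is compact, so $p_K(g_i\circ h-g\circ h)=p_{h(K)}(g_i-g)\to 0$, giving $g_i\circ h\stackrel{\tau_0}{\to}g\circ h$. The left composition is the delicate stage, and it is where the only real obstacle lies, since $\tau_0$ controls only values on compacta while $f$ must be evaluated on the moving sets $(g_i\circ h)(K)$. Writing $G_i=g_i\circ h$ and $G=g\circ h$, I would use Lemma \ref{le} (or a direct estimate) to place $G_i(K)$ eventually inside the bounded open set $V=G(K)+B_E$, and then exploit that $f$, being of bounded type, is uniformly continuous on the bounded set $\overline V$: given $\varepsilon>0$, pick the continuity modulus $\delta\in(0,1)$ for $f$ on $\overline V$ and then $i_0$ with $p_K(G_i-G)<\delta$ for $i\ge i_0$. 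For such $i$ and $x\in K$, both $G(x)$ and $G_i(x)$ lie in $\overline V$ at distance $<\delta$, so $\|f(G_i(x))-f(G(x))\|<\varepsilon$, i.e. $p_K(f\circ G_i-f\circ G)\le\varepsilon$. This yields $f\circ G_i\stackrel{\tau_0}{\to}f\circ G$ and completes the proof that $\mathcal{W}$ is an ideal.

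For the properness statement with $\mathcal{I}=[\mathcal{J}]$, I would show $Id_E\notin\mathcal{W}$, which suffices since $Id_E\in\mathcal{H}_b(E,E)$. Suppose to the contrary that $Id_E\in\mathcal{W}$; then there is a net $(f_i)\subset[\mathcal{J}]$ with $f_i\stackrel{\tau_0}{\to}Id_E$. The key point is that $\tau_0$-convergence of holomorphic maps propagates to their differentials at a point: by the Cauchy integral formula, for fixed $x$, $r>0$ and compact $K$, one has $\sup_{y\in K}\|df_i(x)(y)-df(x)(y)\|\le \frac{1}{r}\,p_{K'}(f_i-f)$ with $K'=\{x+\lambda y:|\lambda|=r,\ y\in K\}$ compact, so $df_i(0)\stackrel{\tau_0}{\to}d(Id_E)(0)=Id_E$ in $\mathcal{L}(E)$. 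But $f_i\in[\mathcal{J}]$ means $df_i(0)\in\mathcal{J}$, and since $\mathcal{J}$ is $\tau_0$-closed we would get $Id_E\in\mathcal{J}$, forcing $\mathcal{J}=\mathcal{L}(E)$ and contradicting non-triviality. Hence $Id_E\notin\mathcal{W}$ and $\mathcal{W}$ is proper.

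In summary, the main obstacle is the left-composition convergence in the first part, where the weakness of $\tau_0$ is compensated by the uniform continuity of bounded-type maps on bounded sets; the properness argument is then a clean consequence of the Cauchy-estimate transfer of $\tau_0$-convergence to the derivative at $0$, matched precisely to the $\tau_0$-closedness hypothesis on $\mathcal{J}$.
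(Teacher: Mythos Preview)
Your proposal is correct and follows essentially the same approach as the paper: the paper also handles the right composition via compactness of $h(K)$, the left composition via Lemma~\ref{le} together with the uniform continuity on bounded sets of mappings of bounded type, and the properness via Cauchy's integral formula to pass $\tau_0$-convergence to the differentials at $0$. The only cosmetic difference is that the paper treats the absorption properties $f\circ g$ and $g\circ f$ separately (with the ideal element on the left, then on the right), whereas you deal with $f\circ g\circ h$ in one pass; the substance is identical.
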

 \begin{proof}
It is easy to see that  $f + g \in \mathcal{W}$ whenever $f \in \mathcal{W}$ and $g \in \mathcal{W}$.  Now, take any $f \in  \mathcal{W}$ and $g \in \mathcal{H}_{b}(E,E)$ and choose any net $(f_i)_{i \in I}$ in $\mathcal{I}$ such that $ \bigl( f_i \bigr) \stackrel{\tau_0}{\rightarrow} f.$  Clearly $(f_i \circ g)_{i \in I} \subset \mathcal{I} $ and  $ \bigl( f_i \circ g  \bigr) \stackrel{\tau_0}{\rightarrow} f \circ g$ since $g(K)$ is a compact subset of $E$ whenever $K$ is a compact subset of $E.$  So, $f \circ g \in \mathcal{W}.$ We just proved that $f \circ g \in \mathcal{W}$ whenever $f \in  \mathcal{W}$ and $g \in \mathcal{H}_{b}(E,E).$ In particular we proved that $f \circ g \in \mathcal{W}$ whenever $f \in \mathcal{W}$ and $g \in \mathcal{W}$. Next we are going to prove that  $g \circ f \in \mathcal{W}$. Indeed, given a compact subset $K$ of $E$,  there is an open and bounded set $V \subset E$ such that $f(K) \subset V$ and, by Lemma  \ref{le},  there exists $i_0 \in I$ such that $f_i(K) \subset V$ for all  $i \geq i_0.$ Moreover, $g$ is uniformly continuous  on $V$ and so, given any $\varepsilon  > 0$ there exists a $\delta > 0$ such that $\Vert g(u) - g(v) \Vert < \varepsilon $ whenever $u,v \in V$ and $\Vert u - v \Vert < \delta.$ The fact that $ \bigl( f_i  \bigr) \stackrel{\tau_0}{\rightarrow} f$ guarantees the existence of  $i_1 \in I, \;\; i_1 > i_0,$ such that  $\Vert f_i(x) - f(x) \Vert < \delta$ for every $x \in K.$ Consequently, as $f_i(K) \subset V$ for every   $i \geq i_0$ and $f(K) \subset V,$ we have
  $$
  \Vert g(f_i(x)) - g(f(x)) \Vert < \varepsilon  \;\; \text{for all} \;\; i > i_1.
  $$
Hence, $ \bigl( g \circ f_i \bigr) \stackrel{\tau_0}{\rightarrow} g \circ f$ and this implies $g \circ f \in \mathcal{W}.$

In the case that $\mathcal{I}= [\mathcal{J}]$, for a non-trivial $\tau_0$-closed ideal $ \mathcal{J} \subset \mathcal{L}(E)$, if $Id_E\in \mathcal{W},$ there would exist a net $(f_i)_{i \in I}\subset [\mathcal{J}]$ converging to $Id_E$ on compact subsets of $E.$ Then, by Cauchy's integral formula, also $(df_i(0))_{i \in I}$ would $\tau_0$-converge to $Id_E.$
So we would be led to the contradiction $Id_E\in \overline{\mathcal{J}}^{\tau_0}=\mathcal{J}$.
  \end{proof}

\begin{remark}
	If $E$  is a complex Banach space with the approximation property, then no non-trivial ideal $\mathcal{J}$ in $\mathcal{L}(E)$ can be $\tau_0$-closed.  Indeed, as we already recalled $\mathcal{F}(E) \subset \mathcal{J}.$ Hence, if $E$ has the approximation property, then $Id_E \in \overline{\mathcal{J}}^{\tau_0}$ and thus $\overline{\mathcal{J}}^{\tau_0}=\mathcal {L}(E)$.
	Analogously no proper ideal $\mathcal{I}$ in $\mathcal{H}_{b}(E,E)$  containing a non constant mapping can be $\tau_0$-closed
	if $E$ has the approximation property. Otherwise, assume that $\mathcal{I}$ is  such a  $\tau_0$-closed ideal in $\mathcal{H}_{b}(E,E).$ Then  $\mathcal{I}\cap \mathcal{L}(E)$   is a non-trivial ideal in  $ \mathcal{L}(E)$. Hence
	$Id_E\in\overline{\mathcal{I}\cap \mathcal{L}(E)}^{\tau_0}\subset \overline{\mathcal{I}}^{\tau_0}  \cap \mathcal{H}_{b}(E,E) = \mathcal{I}$ and this leads  to the contradiction $ \mathcal{I} =   \mathcal{H}_{b}(E,E).$  \end{remark}

\section{Negative results }

Let $H^\infty(B_E,E)$ denote the linear space of elements of $\mathcal{H}(B_E,E)$ which are bounded on $B_E,$ endowed with the sup-norm. It is well known that $H^\infty(B_E,E)$ is a Banach space. If $E=\mathbb{C},$ we will write $H^\infty$ instead of  $H^\infty(B_E,E)$.

\begin{remark}
The composition operation is not separately continuous on the unit ball of $H^\infty.$ Consequently, neither is on  the unit ball of $H^\infty(B_E,E).$
\end{remark}
\begin{proof} Let $(z_n)\subset (0,1)$ be an  increasing interpolating sequence for $H^\infty$.  Pick $f\in H^\infty$ such that $f(z_{2m})=1$ and $f(z_{2m-1})=0$,  for every natural number $m$.

Let $a_m=\frac{z_{m-1}}{z_m},$ then $a_m<1$ and $\lim_m (a_m)=1.$ Thus, the sequence $\bigl( g_m(z) \bigr):=\bigl( a_mz \bigr)$  converges in $H^\infty $ to $g(z)=z$. However, the sequence  $ \Bigl( \frac{f}{\|f\|}\circ g_m\Bigr) $ does not converge uniformly to $\frac{f}{\|f\|},$ because
$$
\Big|\Big(\frac{f}{\|f\|}\circ g_m\Big)(z_m)-\frac{f}{\|f\|}(z_m)\Big|=\Big|\frac{f(a_mz_m)-f(z_m)}{\|f\|}\Big|=
$$
$$
\Big|\frac{f(z_{m-1})-f(z_m)}{\|f\|}\Big|=\frac{1}{\|f\|}.$$

For the general case, pick two unitary vectors $e\in E$ and $e^*\in E^*$ such that $e^*(e)=1,$ and consider the mappings $F(x)=\frac{f}{\|f\|}(e^*(x))e$ and $G(x)=g(e^*(x))e.$  Since $(F\circ G)(x)=\frac{f}{\|f\|}(g(e^*(x))),$ and the function $x\in B_E \mapsto e^*(x)\in\triangle$ is onto, the same argument as above yields the statement.
\end{proof}

A paper due to K. Is\'eki (cf. \cite{Iz2}) presents several results on the structure of compact topo\-lo\-gi\-cal semigroups.  One of these results states that any  nonempty compact semigroup contains at least one idempotent. This fact was used, for instance, by Shields  and by Suffridge  to prove the existence of a common fixed point for all the elements of a commuting family $\mathcal{A}$ of continuous (cf. \cite[Theorem 1]{S}) and of analytic (cf. \cite[Theorem 4]{Su}) mappings under certain conditions.  When attempting to extend this kind of results to convenient semigroups of holomorphic self-mappings with the composition operation defined  on infinite dimensional Banach spaces one faces with the failure of Montel theorem in case of the $\tau_0$-topology.
So, we looked for a more convenient topology. In case $E$ is the  dual  of a complex Banach space
$F,$ Kim and Krantz \cite{KK} defined a very natural topology called
the compact-weak*-open topology  (and denoted by $\tau_0^\ast$)
as follows.
 For  an open set $U\subset E,$  the topology $\tau_0^\ast$ in $\mathcal{H}(U,F^*)$ is the topology generated by the family $\{p_{K,L} : K \subset U
	\;\text{compact},\; L \subset F \; \text{finite} \}$ of seminorms defined in $\mathcal{H}(U,F^*)$ by
	$$
	p_{K,L}(f)= \displaystyle\max_{x \in K, y \in L} \vert f(x)(y) \vert .
	$$


Unfortunately, despite the topology $\tau_0^\ast$ seemed to be a good
choice to deal with our problem, it turns out that  the composition mapping is not continuous in the space of holomorphic mappings we were interested, as we show next.

\begin{prop}
\label{conv-tau-0-*-acotada}
Let $F$ be a Banach space and  write $E=F^*$.  Assume that  $(f_\alpha)_{\alpha \in \Lambda}$ is a net
in $\mathcal{H} (B_E, B_E)$, such that $f_\alpha (x) (y) \to f
(x)(y)$ for each $x \in B_E$ and $ y \in F$  for some $f\in \mathcal{H} (B_E, B_E).$ Then $ \bigl( f_\alpha \bigr)  \stackrel{\tau_0^*}{\to} f.$
\end{prop}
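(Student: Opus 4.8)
The plan is to leverage the pointwise-in-$(x,y)$ convergence hypothesis together with the equicontinuity machinery already developed in Theorem~\ref{cont}, transferring it from the norm setting to the weak$^*$ setting. The statement asks us to upgrade pointwise weak$^*$ convergence (i.e.\ $f_\alpha(x)(y)\to f(x)(y)$ for each fixed $x\in B_E$ and $y\in F$) to convergence in the seminorms $p_{K,L}$, which demands uniformity over $x$ ranging in a compact set $K\subset B_E$ while $y$ ranges in a \emph{finite} set $L\subset F$. Since $L$ is finite, it suffices to handle one functional $y\in F$ at a time: fixing $y\in L$, I must show $\sup_{x\in K}\lvert f_\alpha(x)(y)-f(x)(y)\rvert\to 0$. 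Thus the problem reduces to a scalar uniform-convergence statement for the family of scalar-valued maps $x\mapsto f_\alpha(x)(y)$ on the compact set $K$.

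First I would observe that the family $\mathcal{F}=\mathcal{H}(B_E,B_E)$ is uniformly equicontinuous on compact subsets of $B_E$; this is exactly the fact established in the early part of the proof of Theorem~\ref{cont}, which I may invoke. Fixing $y\in F$ with $\lVert y\rVert\le 1$, the scalar functions $h_y:x\mapsto h(x)(y)$, as $h$ ranges over $\mathcal{F}$, inherit this uniform equicontinuity on compact sets, since $\lvert h(x)(y)-h(x')(y)\rvert\le\lVert h(x)-h(x')\rVert\,\lVert y\rVert$. Consequently the scalar family $\{x\mapsto h(x)(y):h\in\mathcal{F}\}$ is uniformly equicontinuous on $K$, and in particular the net $(f_\alpha)$ restricted to this scalar evaluation is an equicontinuous net.

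Next I would run the same three-$\varepsilon$ argument as in part (A) of Theorem~\ref{cont}, now for scalars. Given $\varepsilon>0$ and the compact set $K$, uniform equicontinuity supplies $\delta>0$ with $\lvert h(x)(y)-h(x')(y)\rvert<\varepsilon/3$ for all $h\in\mathcal{F}$ whenever $x,x'\in K$ satisfy $\lVert x-x'\rVert<\delta$. Cover $K$ by finitely many balls of radius $\delta$ centered at points of a finite set $F_1\subset K$. Because $F_1$ is finite and $f_\alpha(a)(y)\to f(a)(y)$ for each $a\in F_1$ by hypothesis, there is $\alpha_0$ beyond which $\lvert f_\alpha(a)(y)-f(a)(y)\rvert<\varepsilon/3$ for all $a\in F_1$. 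Then for arbitrary $x\in K$, choosing $a\in F_1$ with $\lVert x-a\rVert<\delta$, the triangle inequality
\[
\lvert f_\alpha(x)(y)-f(x)(y)\rvert\le\lvert f_\alpha(x)(y)-f_\alpha(a)(y)\rvert+\lvert f_\alpha(a)(y)-f(a)(y)\rvert+\lvert f(a)(y)-f(x)(y)\rvert<\varepsilon
\]
holds for all $\alpha\ge\alpha_0$, uniformly in $x\in K$. Since $L$ is finite, taking the maximum of the finitely many thresholds $\alpha_0$ over $y\in L$ yields a single index past which $p_{K,L}(f_\alpha-f)<\varepsilon$, giving $f_\alpha\xrightarrow{\tau_0^*}f$.

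I do not expect a serious obstacle here: the only subtlety is ensuring that the equicontinuity established in Theorem~\ref{cont} for the vector-valued family passes cleanly to the scalar functions $x\mapsto h(x)(y)$, but this is immediate from $\lvert h(x)(y)-h(x')(y)\rvert\le\lVert h(x)-h(x')\rVert\,\lVert y\rVert$ together with the boundedness of $y$. The finiteness of $L$ is essential and is what makes the argument work with no uniformity needed in the $y$-variable beyond a single functional at a time.
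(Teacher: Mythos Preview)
Your proof is correct and follows essentially the same route as the paper's: fix $y\in F$, note that the scalar family $\{\tilde{y}\circ h:h\in\mathcal{H}(B_E,B_E)\}$ is equicontinuous, and upgrade pointwise convergence to uniform convergence on compacta---the paper simply cites \cite[Proposition~9.11]{Mu} for this last step whereas you spell out the three-$\varepsilon$ argument from Theorem~\ref{cont}. One cosmetic slip: the restriction $\lVert y\rVert\le 1$ is never used (and $L$ need not lie in the unit ball), but since $\lVert y\rVert$ appears only as a fixed multiplicative constant in your estimate this does no harm.
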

\begin{proof}
As usual, for each $y \in F$ let $\tilde{y}$ denote the element of $F^{\ast \ast }$ defined by $\tilde{y}(\varphi)=\varphi(y)$ for every $\varphi \in F^\ast $.  Clearly $ \bigl( f_\alpha \bigr)  \stackrel{\tau_0^*}{\to} f$ if and only if $ \bigl( \tilde{y} \circ f_\alpha \bigr)  \stackrel{\tau_0}{\to} \tilde{y} \circ f$ for every $y \in F.$ By hypothesis, $(\tilde{y} \circ f_\alpha )_{\alpha \in \Lambda}$ converges pointwise to $\tilde{y} \circ f$ in $B_E$ for each $y \in F$. Moreover, for each $y \in F$ it is clear that $\{\tilde{y} \circ f_\alpha: \alpha \in \Lambda\}$ is an equicontinuous subset of $\mathcal{C}(B_E)$ and so, by \cite[Proposition 9.11]{Mu}, $\tau_0$ induces in $\{\tilde{y} \circ f_\alpha: \alpha \in \Lambda\}$ the topology of pointwise convergence. This implies that $\bigl( f_\alpha \bigr)  \stackrel{\tau_{0}^*}{\longrightarrow } f.$
\end{proof}

\begin{thm}
\label{cont-tau-0-*}
Assume that $F$ is an infinite-dimensional  Banach space and write $E=F^*.$ Then the mapping
$$
(f,g) \in \big(\mathcal{H}(B_{E},B_{E}), \tau_0^\ast\big) \times \big(\mathcal{H}(B_{E} ,B_{E} ),\tau_0^\ast\big)
\mapsto f \circ g \in \big(\mathcal{H }(B_{E} ,B_{E} \big), \tau_0^\ast)
$$
is not  continuous.
\end{thm}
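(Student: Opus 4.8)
The plan is to argue by contradiction, in the spirit of Bonet's Proposition above: if the composition were $\tau_0^\ast$-continuous, then one could recover the duality pairing between $F$ and $E=F^\ast$, which is separately but not jointly continuous, and this is incompatible with $\dim F=\infty$. Concretely, I will exhibit two nets of rank-one self-maps that $\tau_0^\ast$-converge to the zero map while their compositions stay bounded away from zero at a single test point; the $\tau_0^\ast$-convergence of the nets will be read off directly from Proposition \ref{conv-tau-0-*-acotada}.

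First I fix the data. Choose $y_0\in F$ and $\varphi_0\in E=F^\ast$ with $\|y_0\|=\|\varphi_0\|=1$ and $\varphi_0(y_0)=1$ (Hahn-Banach), and put $x_0=\tfrac12\varphi_0\in B_E$, so $x_0(y_0)=\tfrac12$. For $y\in \overline{B}_F$ and $\varphi\in\overline{B}_E$ I consider the rank-one linear contractions $f_y:=\tilde y\otimes\varphi_0$ and $g_\varphi:=\tilde{y_0}\otimes\varphi$, that is $f_y(p)=p(y)\varphi_0$ and $g_\varphi(x)=x(y_0)\varphi$ (here $\tilde y$ is as in the proof of Proposition \ref{conv-tau-0-*-acotada}); since they have norm $\le 1$ they map $B_E$ into $B_E$, hence lie in $\mathcal{H}(B_E,B_E)$. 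A one-line computation gives $(f_y\circ g_\varphi)(x_0)=\tfrac12\varphi(y)\varphi_0$, so that $(f_y\circ g_\varphi)(x_0)(y_0)=\tfrac12\varphi(y)$. This is the pairing I will force to be discontinuous.

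Next I index a net by the directed set of pairs $d=(A,B)$ with $A\subset F$ and $B\subset F^\ast$ finite, ordered by coordinatewise inclusion. The key step — and the main use of infinite-dimensionality, which I expect to be the one genuinely delicate point — is to choose, for each $d$, a unit vector $y_d\in\bigcap_{b\in B}\ker b$ with $\dist\bigl(y_d,\mathrm{span}\,A\bigr)\ge\tfrac12$, and then, by the distance form of Hahn-Banach, a functional $\varphi_d\in\overline{B}_E$ with $\varphi_d|_{\mathrm{span}\,A}=0$ and $\varphi_d(y_d)=\dist\bigl(y_d,\mathrm{span}\,A\bigr)\ge\tfrac12$. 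The existence of such $y_d$ is a Riesz-lemma argument: $\mathrm{span}\,A$ is finite-dimensional while $\bigcap_{b\in B}\ker b$ has finite codimension in $F$, hence is infinite-dimensional, so it contains a unit vector almost orthogonal to the finitely many directions of $A$ (a suitable fixed positive lower bound on the distance is all that is needed, and can be obtained by a finite net of the unit sphere of $\mathrm{span}\,A$).

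Finally I set $f_d:=f_{y_d}$ and $g_d:=g_{\varphi_d}$. Since each $b\in B$ forces $b(y_d)=0$, the net $(y_d)$ is weakly null, whence $f_d(p)(y)=p(y_d)\varphi_0(y)\to 0$ for every $p\in B_E$ and $y\in F$; likewise each $a\in A$ forces $\varphi_d(a)=0$, so $(\varphi_d)$ is weak$^\ast$-null and $g_d(x)(y)=x(y_0)\varphi_d(y)\to 0$ for every $x\in B_E$ and $y\in F$. By Proposition \ref{conv-tau-0-*-acotada} both nets converge to the zero map in $\tau_0^\ast$, so $(f_d,g_d)\to(0,0)$ and the prospective limit of the compositions is $0\circ 0=0$. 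Yet $(f_d\circ g_d)(x_0)(y_0)=\tfrac12\varphi_d(y_d)\ge\tfrac14$ for every $d$, so $p_{\{x_0\},\{y_0\}}(f_d\circ g_d-0)\ge\tfrac14$ and $(f_d\circ g_d)$ does not $\tau_0^\ast$-converge to $f\circ g=0$. This contradicts the assumed continuity of the composition and establishes the theorem.
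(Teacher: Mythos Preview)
Your proof is correct and takes a genuinely different route from the paper's.

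\textbf{Comparison.} The paper builds a \emph{single} net $f_\alpha(y^*)=y^*(y_\alpha)y_\alpha^*$ in which both the ``input'' vectors $y_\alpha\in F$ and the ``output'' functionals $y_\alpha^*\in E$ vary simultaneously; it uses the weak density of the sphere in the ball (to get a weakly null net of fixed norm) together with Banach--Alaoglu (to extract a weak$^*$-convergent subnet of the functionals), and then checks that $f_\alpha\circ f_\alpha$ fails to converge to $f\circ f$ because the diagonal values $y_\alpha^*(y_\alpha)$ retain an extra additive constant $1/16$. You instead separate the two directions into \emph{two} nets, $f_d=\tilde y_d\otimes\varphi_0$ and $g_d=\tilde y_0\otimes\varphi_d$, driven by a directed set of finite subsets of $F\times F^*$; both converge to the zero map, and the obstruction is isolated in the single scalar $\varphi_d(y_d)$. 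This makes the link with Bonet's linear argument transparent and avoids Banach--Alaoglu entirely, at the cost of the Riesz-type step producing a unit vector in an infinite-dimensional subspace uniformly far from a finite-dimensional one. That step is indeed the crux: your sketch via a finite net of $S_{\mathrm{span}\,A}$ works and yields a bound like $1/3$ (the exact value $1/2$ is attainable as a supremum but you only need \emph{some} fixed $c>0$, as you note). A quick alternative justification: if every unit $y\in V$ satisfied $\dist(y,W)<1/4$, then a Riesz sequence $(y_n)\subset S_V$ with $\|y_n-y_m\|>3/4$ would yield points $w_n\in W$ with $\|w_n\|<5/4$ and $\|w_n-w_m\|>1/4$, contradicting the compactness of bounded sets in the finite-dimensional $W$. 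Either way, your argument goes through with $(f_d\circ g_d)(x_0)(y_0)=\tfrac12\varphi_d(y_d)$ bounded below by a fixed positive constant.
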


\begin{proof}
Since $F$ is infinite-dimensional,  the set  $S_F$ is dense in the closed unit ball of $F$ for  the weak
topology. Hence there is a net $(z_\alpha )$ in $F$ such that
\begin{equation}
\label{z-alpha}
\Vert z_\alpha  \Vert = \frac{1}{4} \sep \text{for every}\  \alpha  \seg \text{and} \seg  (z_\alpha )
\stackrel{w}{\to} 0.
\end{equation}
So there is a net $(z_{\alpha} ^{*})$ in $E$ satisfying that
\begin{equation}
\label{z-alpha-*}
\Vert z_{\alpha} ^{*} \Vert = \frac{1}{4} \seg \text{and} \seg  z_{\alpha }^* (z_\alpha )= \frac{1}{16} \seg
\text{for every}\  \alpha .
\end{equation}
Since the  closed unit ball of $E$ is weak$^*$-compact, and passing to a subnet, if needed, we can also
assume that the net $(z_{\alpha} ^{*})$ converges in the weak$^*$ topology of $E$ to some element $z^* \in
E$ satisfying that $\Vert z^* \Vert \le \frac{1}{4}$.

Now choose  elements $y_{0}^* \in \frac{1}{4} S_{E}$  and $y_0 \in \frac{1}{4} S_F$ satisfying also that
\begin{equation}
\label{y0-y0*}
y_{0}^* \ne - z^{*}, \seg   (z^* + y_{0}^*)(y_0) \ne 0 .
\end{equation}

Define the nets $(y_\alpha )$ in $F$  and $(y_{\alpha }^*)$  in $E$ by
$$
y_\alpha = z_\alpha  +y_{0}, \seg  y_{\alpha }^*= z_{\alpha }^* +y_{0}^*, \sep \forall \alpha .
$$
In view of \eqref{z-alpha}   and \eqref{z-alpha-*} it is clear that for each $\alpha $ we have
\begin{equation}
\label{norma-yn-yn*}
\Vert y_\alpha  \Vert \le \Vert z_\alpha  \Vert + \Vert y_0 \Vert \le  \frac{1}{2}
 \seg  \text{and}
\seg \Vert y_{\alpha}^*  \Vert \le \Vert z_ { \alpha }^* \Vert + \Vert y_{0}^* \Vert \le \frac{1}{2}.
\end{equation}

It is also satisfied that
\begin{equation}
\label{conv-y-alpha-y*}
(y_\alpha ) \stackrel{w}{\to}  y_0, \seg (y_{\alpha}^*) \stackrel{w^*}{\to} z^* +  y_{0}^*, \seg
\bigl(y_{\alpha}^* (y_\alpha )\bigr) \to \frac{1}{16} + (z^{*} + y_{0}^* )(y_0)  .
\end{equation}

Let us consider the net of mappings $(f_\alpha )$ given by
$$
f_\alpha  (y^*)  = y^* ( y_\alpha ) y_\alpha  ^* \seg (y^* \in B_{E}, \alpha \in \Lambda  ).
$$
By \eqref{norma-yn-yn*} we have that $f_\alpha  \in \mathcal{H}(B_{E}, B_{E})$. For every $ y^* \in
B_{E}$ and $y \in  F$, in view of \eqref{conv-y-alpha-y*} we also have that
$$
f_\alpha  (y^*)  (y) =  y^* ( y_\alpha ) y_\alpha  ^*(y) \to y^* (y_0)   \bigl( z^{*} + y_{0}^* \bigr)  (y).
$$
 By using Proposition
\ref{conv-tau-0-*-acotada}, we obtain that $\bigl( f_\alpha \bigr)   \stackrel{\tau_{0}^*}{\to } f$, where $f$ is the
mapping given by
$$
f (y^*)  =  y^* (y_0)   \bigl( z^{*} + y_{0}^* \bigr)   \seg \text{for all } y^* \in B_{E},
$$
that clearly belongs to $\mathcal{H} (B_{E}, B_{E})$.

Now we will check that  $(f_\alpha  \circ f_\alpha )$  does not converge to  $f \circ f$  in the topology
$\tau_{0}^*$. Let us fix $y^* \in B_{E}$ and $y \in F$. On one hand,  by \eqref{conv-y-alpha-y*} we have that
$$
\bigl( (f_\alpha  \circ f_\alpha  )(y^*) \bigr) (y)= f_\alpha  (y^* ( y_\alpha ) y_\alpha  ^*) (y)=   y^* ( y_\alpha )
y_\alpha ^* (y_\alpha ) y_{\alpha}^*(y) \to  y^* (y_0) \Bigl( \frac{1}{16}  + \bigl( z^{*} + y_{0}^* \bigr)
(y_0) \Bigr) \bigl( z^* + y_{0}^* \bigr) (y).
$$
On the other hand,
$$
\bigl( (f \circ f )(y^*) \bigr) (y)= f \bigl(  y^* ( y_0) \bigl( z^{*} + y_{0}^* \bigr) \bigr)(y)=   y^* ( y_0)
 \bigl( z^{*} + y_{0}^* \bigr) (y_0)  \bigl( z^{*} + y_{0}^* \bigr) (y).
$$
Since  $y_0 \ne 0$ and $z^* + y_{0}^* \ne 0$, we can choose elements $y^* \in B_{E}$ and  $y \in F$  such
that $y^* (y_0) \ne 0$ and  $(z^* + y_{0}^*)(y)  \ne 0$. Hence  $(f_\alpha \circ f_\alpha )$ does not
converge to $f \circ f$ in the topology $\tau_{0}^*$.
\end{proof}

\bibliographystyle{amsalpha}

\end{document}